\documentclass[11pt]{article}
\usepackage{amsfonts,amssymb,amsmath,amsthm,amstext,amssymb,mathtools}
\usepackage{caption,subcaption,float,color,graphicx,enumerate}
\usepackage{dsfont}  
\usepackage{fullpage} 
\usepackage{parskip} 
\usepackage{paralist} 

\usepackage{array}  
\newcolumntype{C}[1]{>{\centering\arraybackslash}m{#1}}

\usepackage{tikz}
\usetikzlibrary{bayesnet,er,trees,shapes.symbols,mindmap,arrows,snakes,shapes.misc,shapes.arrows,chains,matrix,positioning,scopes,decorations.pathmorphing,patterns}
\usepackage{tikz-cd} 

\pgfdeclarelayer{bg}    
\pgfsetlayers{bg,main}  

\usepackage{hyperref}

\usepackage{mathabx} 
\mathchardef\ordinarycolon\mathcode`\:
\mathcode`\:=\string"8000
\begingroup \catcode`\:=\active
\gdef:{\mathrel{\mathop\ordinarycolon}}
\endgroup

\newcommand*{\Cdot}[1][2.4]{%
  \mathpalette{\CdotAux{#1}}\cdot%
}
\newdimen\CdotAxis
\newcommand*{\CdotAux}[3]{%
  {%
    \settoheight\CdotAxis{$#2\vcenter{}$}%
    \sbox0{%
      \raisebox\CdotAxis{%
        \scalebox{#1}{%
          \raisebox{-\CdotAxis}{%
            $\mathsurround=0pt #2#3$%
          }%
        }%
      }%
    }%
    \dp0=0pt %
    \sbox2{$#2\bullet$}%
    \ifdim\ht2<\ht0 %
      \ht0=\ht2 %
    \fi
    \sbox2{$\mathsurround=0pt #2#3$}%
    \hbox to \wd2{\hss\usebox{0}\hss}%
  }%
}

\DeclarePairedDelimiter\abs{\lvert}{\rvert}%
\DeclarePairedDelimiter\norm{\lVert}{\rVert}%
\makeatletter
\let\oldabs\abs
\def\abs{\@ifstar{\oldabs}{\oldabs*}}
\let\oldnorm\norm
\def\norm{\@ifstar{\oldnorm}{\oldnorm*}}
\makeatother

\definecolor{dunkelgruen}{rgb}{0,0.4,0}

\DeclareMathOperator{\diag}{diag}

\def\R{\mathbb{R}}
\def\N{\mathbb{N}}

\def\V{\mathbb{V}}

\def\Id{{\rm Id}}

\theoremstyle{plain}
\newtheorem{theorem}{Theorem}

\newtheorem{proposition}[theorem]{Proposition}

\newtheorem{example}[theorem]{Example}

\newtheorem{axiom}[theorem]{Axiom}
\newtheorem{remark}[theorem]{Remark}

\def\MSE{{\rm MSE}}

\def\opt{{\rm opt}}
\def\mtm{\mathtt{m}}
\def\mth{\mathtt{h}}
\def\mtg{\mathtt{g}}
\def\hot{\rm h.o.t.}
\def\MM{{\rm MM}}

\def\cY{\mathcal{Y}}
\def\B{\mathbb{B}}

\def\gldr{\mathrm{GL}(d,\R)}
\def\tr{{\rm tr}}

\begin{document}
\title{Axiomatic Approach to Variable Kernel Density Estimation}
\author{Ilja Klebanov\footnote{Zuse Institute Berlin (ZIB), Takustra\ss e 7, 14195 Berlin, Germany (klebanov@zib.de).}}
\date{\today}
\maketitle
\begin{abstract}
Variable kernel density estimation allows the approximation of a probability
density by the mean of differently stretched and rotated kernels centered at given
sampling points $y_n\in\R^d,\ n=1,\dots,N$. Up to now, the choice of the
corresponding bandwidth matrices $h_n$ has relied mainly on asymptotic arguments, like the minimization of the asymptotic mean integrated squared error (AMISE), which work well for large numbers of sampling points.
However, in practice, one is often confronted with small to moderately sized
sample sets far below the asymptotic regime, which highly restricts the usability
of such methods.

As an alternative to this asymptotic reasoning we suggest an axiomatic approach which guarantees
invariance of the density estimate
under linear transformations of the original density (and
the sampling points) as well as under splitting of the density into several `well-separated' parts. In order to still
ensure proper asymptotic behavior of the estimate, we \emph{postulate} the
typical dependence $h_n\propto N^{-1/(d+4)}$.
Further, we derive a new bandwidths selection rule which satisfies these axioms and performs considerably better than conventional ones in an artificially intricate two-dimensional example as well as in a real life example.
\end{abstract}

\noindent
\textbf{Keywords.} Variable kernel density estimation, adaptive kernel
smoothing, adaptive convolutions, invariance, axiomatic approach, local variation
\\
\textbf{2010 MSC}: 62G07

\section{Introduction}
\label{section:Introduction}

The classical density estimation problem is to recover a probability density
$\rho$ from independent and identically distributed samples from that density, $y_1,\dots,y_N\stackrel{\rm iid}{\sim}\rho$.
A widely used nonparametric technique is kernel density estimation (KDE), see
e.g. the classical works
\cite{rosenblatt1956remarks,zbMATH03188880,silverman1986density} or
\cite{wand1994kernel, scott2015multivariate, botev2010kernel} for more recent
surveys, which approximates $\rho$ by the mean of so-called kernels centered at
the sample points $y_n$,
\begin{equation}
\label{equ:kde}
\hat{\rho}(x)
=
\frac{1}{N}\sum_{n=1}^{N} K_h\left(x-y_n\right)
=
\frac{1}{Nh^d}\sum_{n=1}^{N} K\left(h^{-1}(x-y_n)\right),
\end{equation}
where $h>0$ is the bandwidth of the kernel function $K\colon \R^d\to\R$.
From now on we will assume that $\rho$ lies in the space $C^2\cap
L^2(\R^d)$ and the kernel $K\in C^2(\R^d)$ is a radially symmetric probability
density function, i.e.
\begin{equation}
\label{equ:radialKernel}
\norm{K}_{L^1(\R^d)} = 1,
\qquad
K(x) = \gamma(\|x\|_2^2),
\end{equation}
for some function $\gamma\colon \R_{\ge 0}\to \R_{\ge 0}$.
A lot of effort has been put into the `optimal' choice of the bandwidth $h$, see e.g. \cite{jones1996brief,loader1999bandwidth} -- choosing it too large or too small results in oversmoothing or undersmoothing, as visualized in Figure \ref{fig:OversmoothingUndersmoothing}.
\begin{figure}[H]
        \centering
        \begin{subfigure}[b]{0.48\textwidth}
            \centering
			\includegraphics[width=\textwidth]{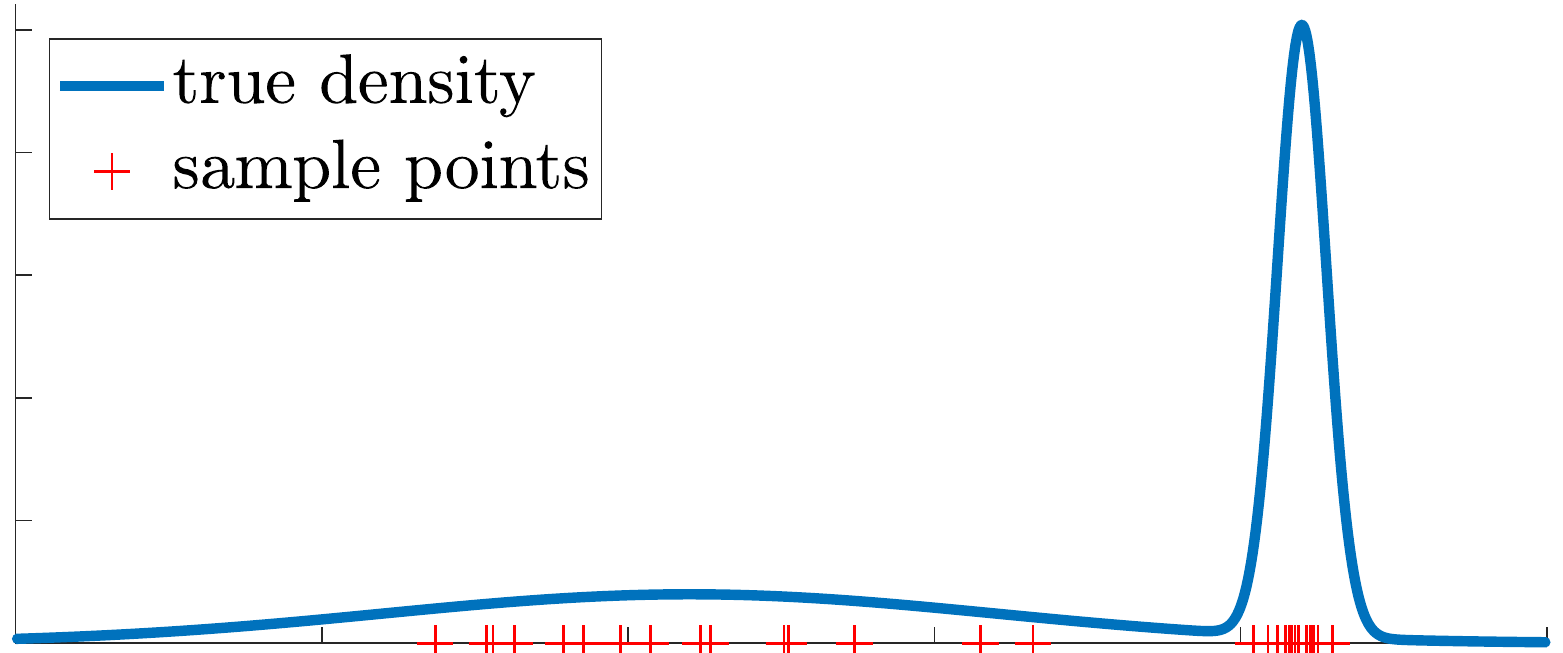}
            \caption{True density with 30 sampling points}
        \end{subfigure}
        \hfill
        \begin{subfigure}[b]{0.48\textwidth}
            \centering
			\includegraphics[width=\textwidth]{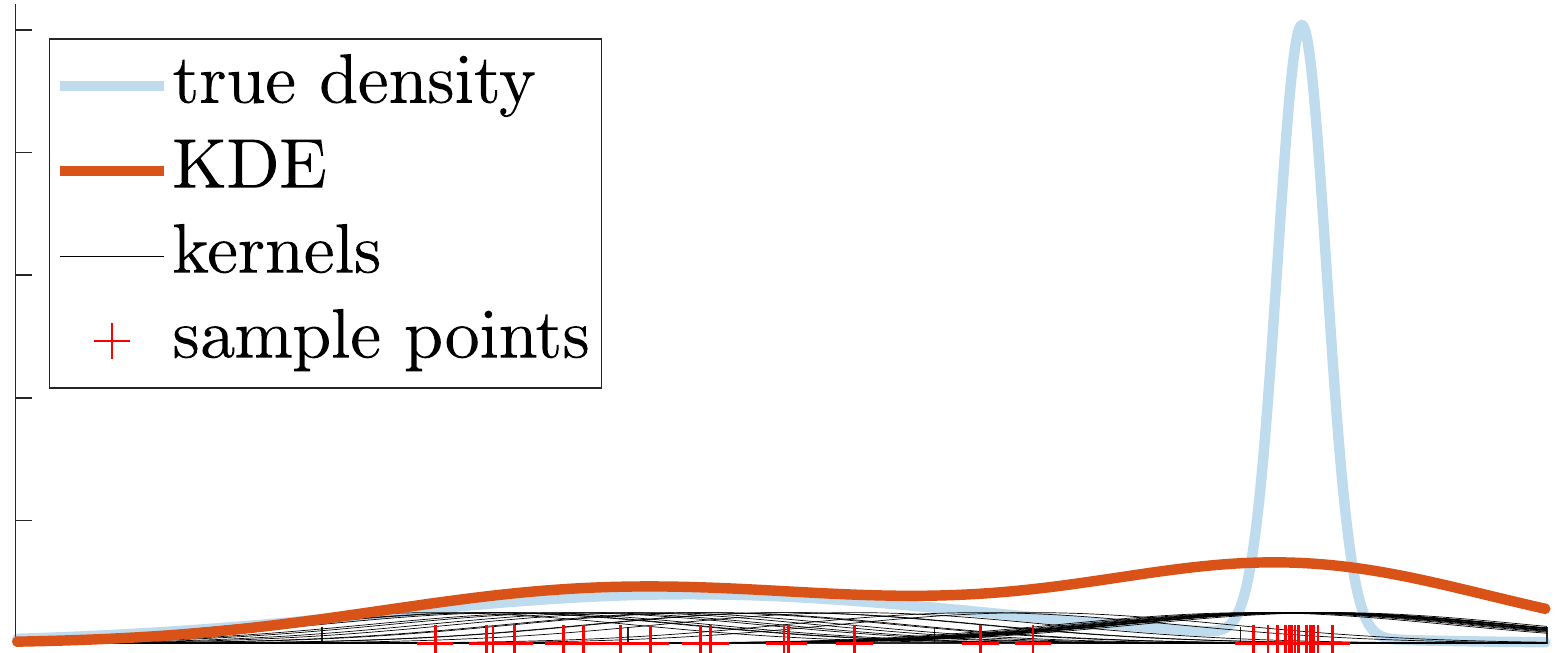}
			\caption{Flat kernels oversmooth the `right' part}
		\end{subfigure}
		\vspace{0.2cm}
		\vfill
        \begin{subfigure}[b]{0.48\textwidth}
            \centering
			\includegraphics[width=\textwidth]{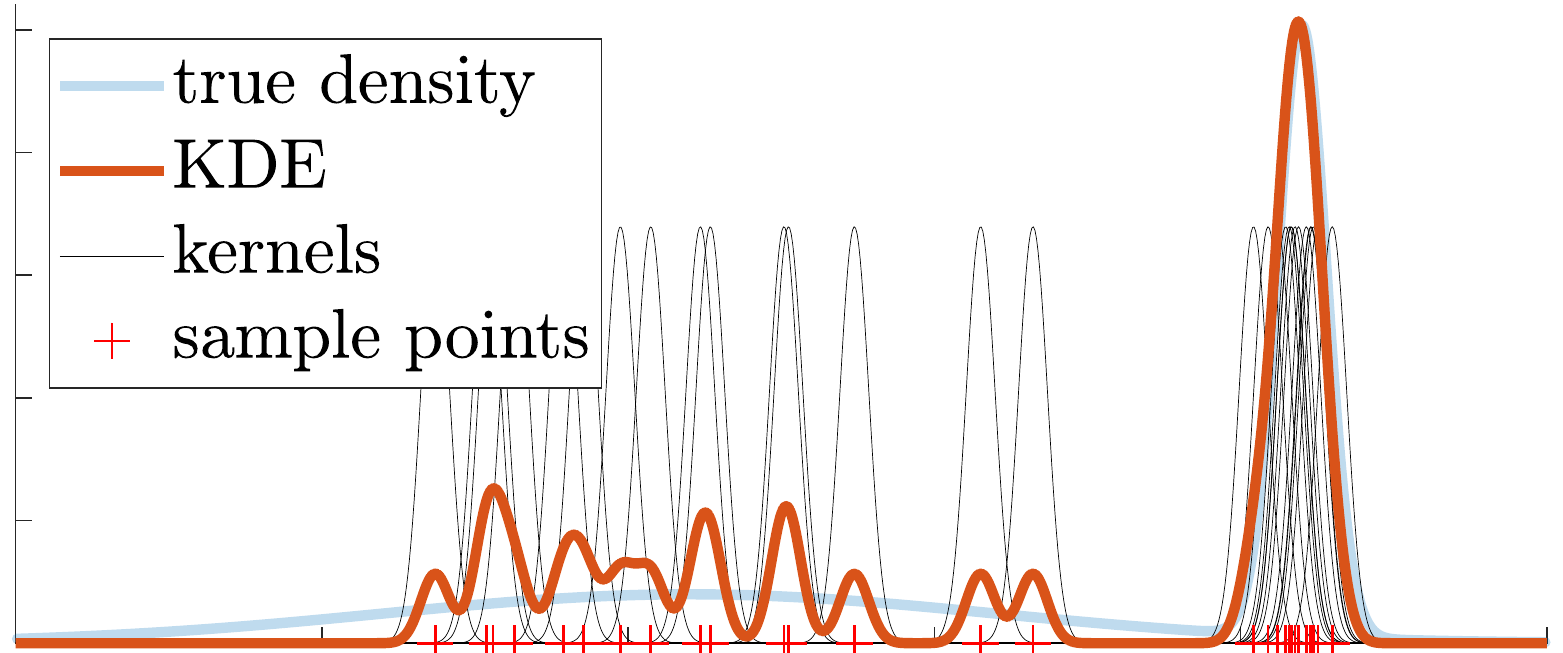}
			\caption{Peaked kernels undersmooth the `left' part}
        \end{subfigure}
        \hfill
        \begin{subfigure}[b]{0.48\textwidth}
            \centering
			\includegraphics[width=\textwidth]{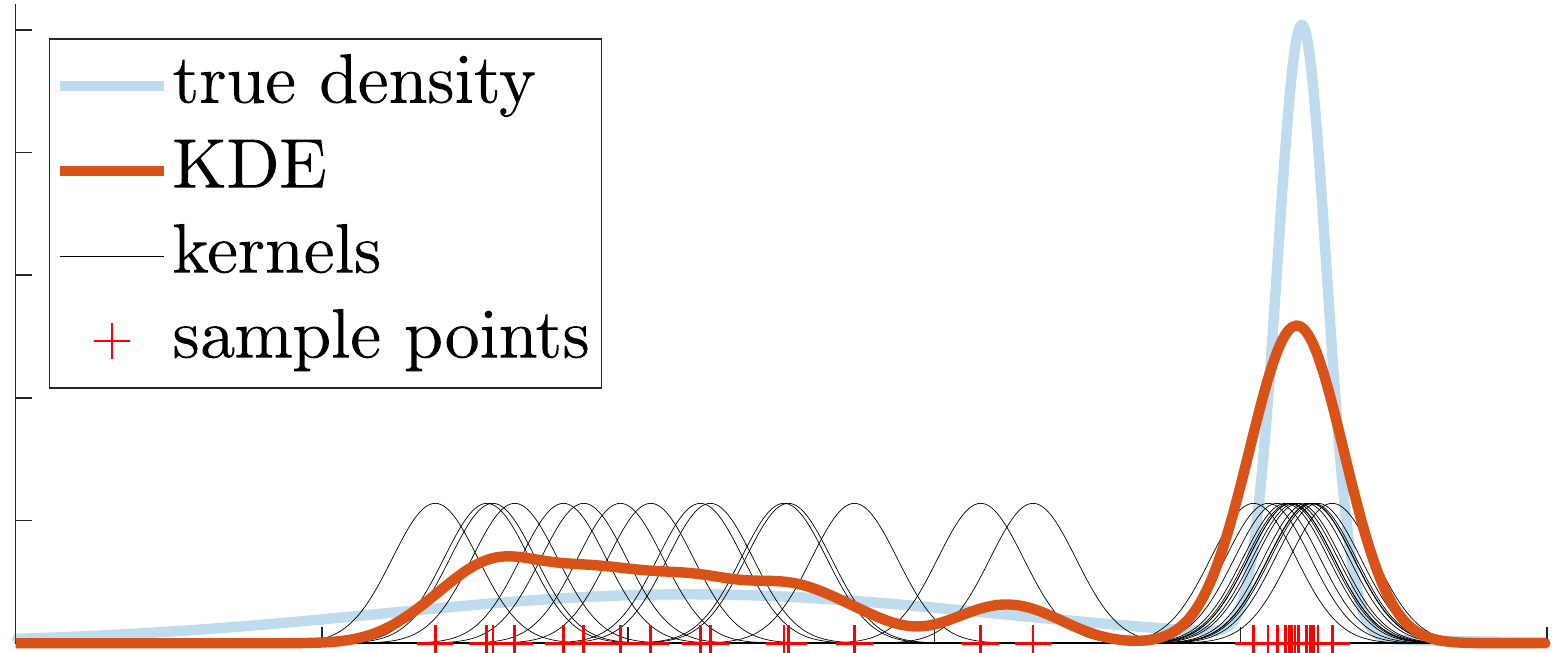}
            \caption{Optimal kernels still over-/undersmooth}
        \end{subfigure}
		\vspace{0.2cm}
		\vfill
        \begin{subfigure}[b]{0.48\textwidth}
            \centering
			\includegraphics[width=\textwidth]{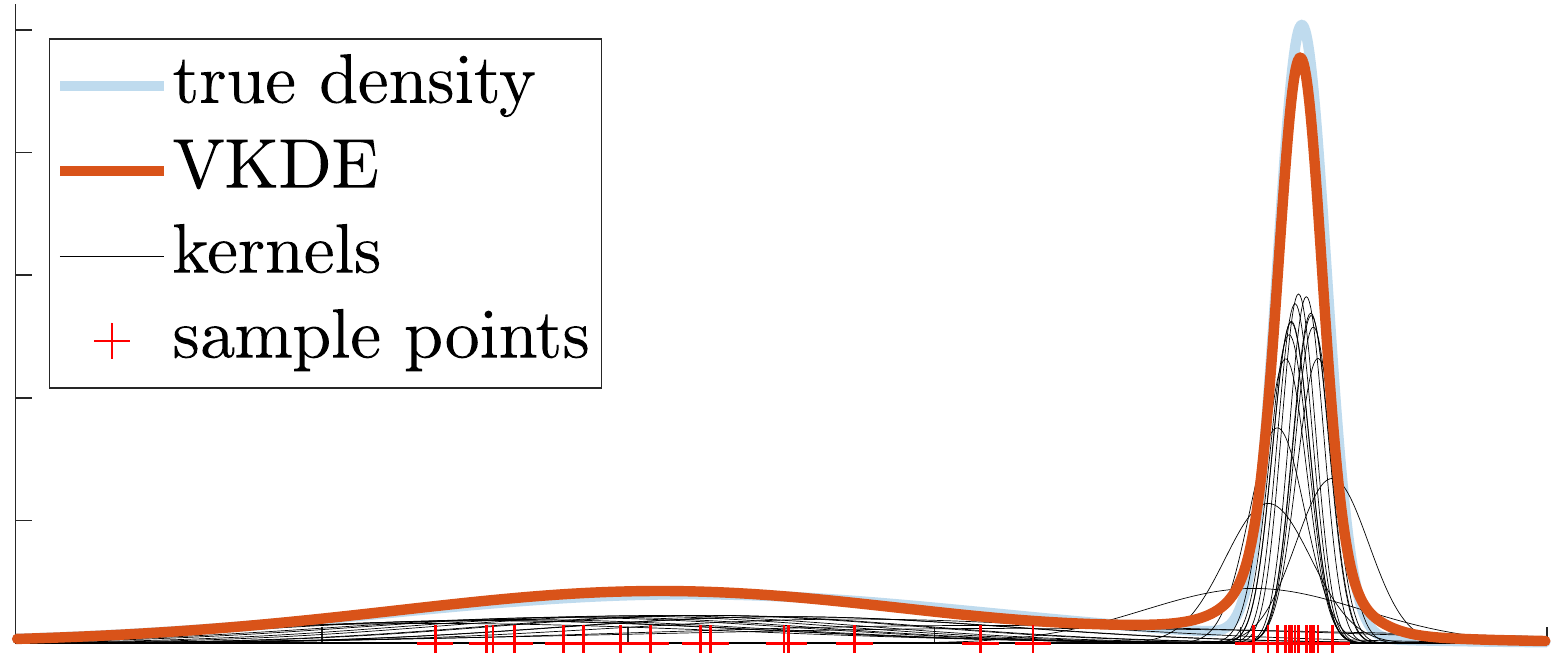}
			\caption{VKDE using \eqref{equ:hLaw} with $\beta = 1$}
        \end{subfigure}        	
        \caption{Choosing a suitable bandwidth for the `flat' part oversimple the `peaked' part. Choosing a suitable bandwidth for the `peaked' part undersmoothes the `flat' part. A trade-off between the two also yields an unsatisfactory density estimate. This dilemma of KDE can be overcome by VKDE, where the bandwidth is adapted locally. The kernels (divided by the factor 5 for illustration purposes) are plotted in black. The proportionality constant in \eqref{equ:hLaw} was chosen manually.
		}
        \label{fig:OversmoothingUndersmoothing}
\end{figure}
Optimality is usually measured by the \emph{mean integrated squared error} (MISE) or its asymptotic approximation (AMISE),
see \cite{silverman1986density,scott2015multivariate}.
A remarkable observation is the type of dependence of the optimal bandwidth $h_\opt$ on the number $N$ of sample points (\cite[equation (3.21)]{silverman1986density}),
\begin{equation}
\label{equ:hDependsOnN}
h_\opt\propto N^{-1/(d+4)},
\end{equation}
which appears counterintuitive (e.g. in the univariate case $d=1$ one would
expect that doubling the number of points corresponds to half as wide kernels).

But even an optimally chosen bandwidth can still cause oversmoothing in regions of high point density and cause peaked behavior of $\hat{\rho}$ in regions where only few points lie, see Figure \ref{fig:OversmoothingUndersmoothing} (d).
\emph{Variable kernel density estimation} (VKDE) tries to overcome this downside by adapting $h$ \emph{locally} (Figure \ref{fig:OversmoothingUndersmoothing} (e)). Roughly speaking, there are two possibilities to do so:
sample-point estimation, employing a different bandwidth $h_n$ for each data point $y_n$, and balloon estimation, for which the bandwidth $h(x)$ varies with the estimation location $x\in\R^d$, see the discussion and graphic illustration in \cite{jones1990variable}.
We will concentrate on sample-point estimators,
\begin{equation}
\label{equ:rhoV}
\hat{\rho}_\mth(x)
=
\frac{1}{N}\sum_{n=1}^{N} |\det h_n|^{-1} K\left(h_n^{-1}(x-y_n)\right),
\qquad
\mth = (h_1,\dots,h_N),
\end{equation}
since, in contrast to balloon estimators, they result in probability density functions by construction.
Here, we also generalized the standard definition to matrix-valued bandwidths
$h_n\in\mathrm{GL} (d,\R)$, such that each kernel can be stretched and rotated in space.

Let us first deal with the case of scalar bandwidths $h_n>0$ before discussing the matrix-valued case (here, the coefficients $\abs{\det h_n}^{-1}$ have to be replaced by $h_n^{-d}$).
Since we prefer peaked kernels in areas of high density and flat kernels in areas of low density,
a dependence of the form 
\begin{equation}
\label{equ:hLaw}
h_n \propto\, N^{-1/(d+4)}\, \rho(y_n)^{-\beta},
\end{equation}
where $\beta >0$ is the so-called sensitivity parameter, appears natural.
While \cite{zbMATH03800791} argues that $\beta = 1/2$ should be used independent of the dimension, \cite{zbMATH03591205} suggest $\beta = 1/d$, which guarantees consistency of the sample-point estimator $\hat\rho_V$ under scaling
-- if the density and the sample points are both scaled in space by a factor $\alpha>0$, the estimate is scaled correspondingly:
\begin{equation}
\label{equ:simpleScalingEstimator}
\rho'(x) = \alpha^d\rho(\alpha x),
\quad
y_n' = \alpha^{-1}y_n
\qquad
\text{implies}
\qquad
\hat \rho_{\mth'}'(x) = \alpha^d\, \hat \rho_\mth(\alpha x).
\end{equation}
However, both choices are inconsistent if the scaling is performed by a matrix
$A\in\gldr$. One requires more sophisticated rules than \eqref{equ:hLaw} in
order to guarantee the more general scaling condition
\begin{equation}
\label{equ:GeneralScalingEstimator}
\rho'(x) = \abs{\det A}\,\rho(A x),
\quad
y_n' = A^{-1}y_n
\qquad
\text{implies}
\qquad
\hat \rho_{\mth'}'(x) = \abs{\det A}\, \hat \rho_\mth(A x),
\end{equation}
see the discussion in Section \ref{section:ScalingConditionsAndChoice}.

Earlier, Parzen (\cite[equation (4.15)]{zbMATH03188880}) derived the following law for $h_n$ in the univariate case by minimizing the minimal squared error (MSE):
\begin{equation}
\label{equ:ParzenBandwidth}
h_n
=
\left(
\frac{C(K)\, \rho(y_n)}{N\, \rho''(y_n)^2}
\right)^{\frac{1}{5}},
\quad
C(K) := \frac{\int K^2(t)\, \mathrm dt}{\left(\int t^2K^2(t)\, \mathrm dt\right)^2},
\end{equation}
where we again observe the dependence $h_n\propto N^{-1/5}$ as in \eqref{equ:hDependsOnN}.
Since minimizing the MSE locally asymptotically corresponds to minimizing the
MISE, see the discussion in Section \ref{section:Parzen} or in \cite[Chapter 6.6]{scott2015multivariate}, this formula is of great interest for VKDE.
However, Parzen did not have in mind the application to \emph{variable} KDE and, as discussed in Sections \ref{section:Parzen} and \ref{section:NumericalExperiments}, the law \eqref{equ:ParzenBandwidth} is difficult to generalize to higher dimensions and can perform poorly for small sample sizes.

\subsection{Axiomatic Approach to Bandwidth Selection}
While asymptotically optimal bandwidths selectors provide good results for large sample sizes, they are usually not the appropriate tool if the number of samples is small or only moderately large. If the sample size is far below the asymptotic regime, an alternative approach appears necessary.

We suggest to base the selection of the bandwidths on certain invariance axioms.
Apart from invariance of the density estimate under shifting of the original density (and the sample points), which is fulfilled by most KDE and VKDE estimates, and the scaling invariance \eqref{equ:GeneralScalingEstimator}, we introduce invariance of the estimator under `splitting' of the original density (and the corresponding sample points) into well-separated parts. This condition is an entirely new concept,
which we will shortly sketch here and discuss in more detail in Section \ref{section:ScalingConditions} (in particular Theorem \ref{theorem:InvariancePropertiesVKDE}(ii), Remark \ref{rem:InvariancePropertiesVKDE} and Figure \ref{fig:TwoDensitiesPlot}):

If a density is a convex combination of two densities $\rho^{(1)},\, \rho^{(2)}$
with disjoint and far-apart supports $\Omega_1,\, \Omega_2\subset\R^d$, its
density estimate $\hat\rho_\mth$ based on the sampling $\cY =
(y_1,\dots,y_N)\stackrel{\rm iid}{\sim}\rho$ should be approximately the
(similar) convex combination of the density estimates
$\hat\rho_{\mth^{(1)}}^{(1)},\, \hat\rho_{\mth^{(2)}}^{(2)}$ based on the same
sampling points in the respective domains, $\cY\cap\Omega_1,\, \cY\cap\Omega_2$:
\[
\rho = \alpha\rho^{(1)} + (1-\alpha)\rho^{(2)},\ \alpha\in[0,1]
\quad
\text{should imply}
\quad
\hat\rho_\mth \approx \alpha\hat\rho_{\mth^{(1)}}^{(1)} + (1-\alpha)\hat\rho_{\mth^{(2)}}^{(2)}.
\]
The approximation sign becomes an equality if we let the distance between the two domains $\Omega_1$ and $\Omega_2$ converge to infinity.
Relying on the theory of adaptive convolutions and the concept of the local variation of a function, we derive a new bandwidth selection rule which fulfills the proposed axioms and shows superior performance in several examples.


The paper is structured as follows.
Section \ref{section:FixedPointIteration} addresses the implementation of bandwidths selection rules like \eqref{equ:hLaw} and \eqref{equ:ParzenBandwidth} (in practice, the true density is, of course, not accessible).
In Section \ref{Section:AdaptiveConvo}, we give a short overview on adaptive convolutions, which inspires both, the invariance axioms introduced in Section \ref{section:ScalingConditions} as well as the bandwidth selection rule analyzed in Section \ref{section:BandwidthsChoice}.
In Section \ref{section:Parzen} we revisit Parzen's law \eqref{equ:ParzenBandwidth} in an attempt to generalize it to the multivariate case.
A comparison of the different VKDE methods is illustrated by two examples with artificial as well as real life data in Section \ref{section:NumericalExperiments}.
Section \ref{section:Conclusion} gives a short conclusion, while Appendix
\ref{section:technicalDetails} discusses some computational details in the
case of Gaussian kernels.

\section{Practical Realizations of the Laws \eqref{equ:hLaw},
\eqref{equ:ParzenBandwidth} and Similar}
\label{section:FixedPointIteration}

For theoretical considerations it is common to choose $h_n$ in dependence of $\rho$, $y_n$ and $N$ (and possibly of derivatives of $\rho$ as in \eqref{equ:ParzenBandwidth}) in order to show invariance properties or optimality in some sense. Of course, in practice, the true and unknown density $\rho$ in not accessible and one is forced to switch to pilot estimates (e.g. kernel density estimates of $\rho$ with a fixed bandwidth (\cite{abramson1982arbitrariness})), to asymptotic approximations by using the ($k$th) nearest neighbors of the points $y_n$ (\cite{zbMATH03591205}) or similar.
Surprisingly, the application of a fixed point iteration for the inverse bandwidths $\mth = (h_n)_{n=1,\dots,N}$ has not yet been suggested (to the author's best knowledge), though such a method is strongly related to the \emph{solve-the-equation bandwidth selector}, see e.g. \cite{jones1996brief}.
For a law of the general form
\[
h_n = \Phi_N(\rho,y_n),
\qquad
n=1,\dots,N,
\]
such as \eqref{equ:hLaw} or \eqref{equ:ParzenBandwidth}, and starting with
initial bandwidths $\mth^{(0)} = (h_1^{(0)},\dots,h_N^{(0)})$, we propose the
iteration
\begin{equation}
\label{equ:densityFixedPointIteration}
h_n^{(k+1)} = \Phi_N(\rho_{\mth^{(k)}}, y_n),
\qquad
n=1,\dots,N,\ k\in\N.
\end{equation}
\begin{figure}[H]
\centering
\includegraphics[width=0.6\textwidth]{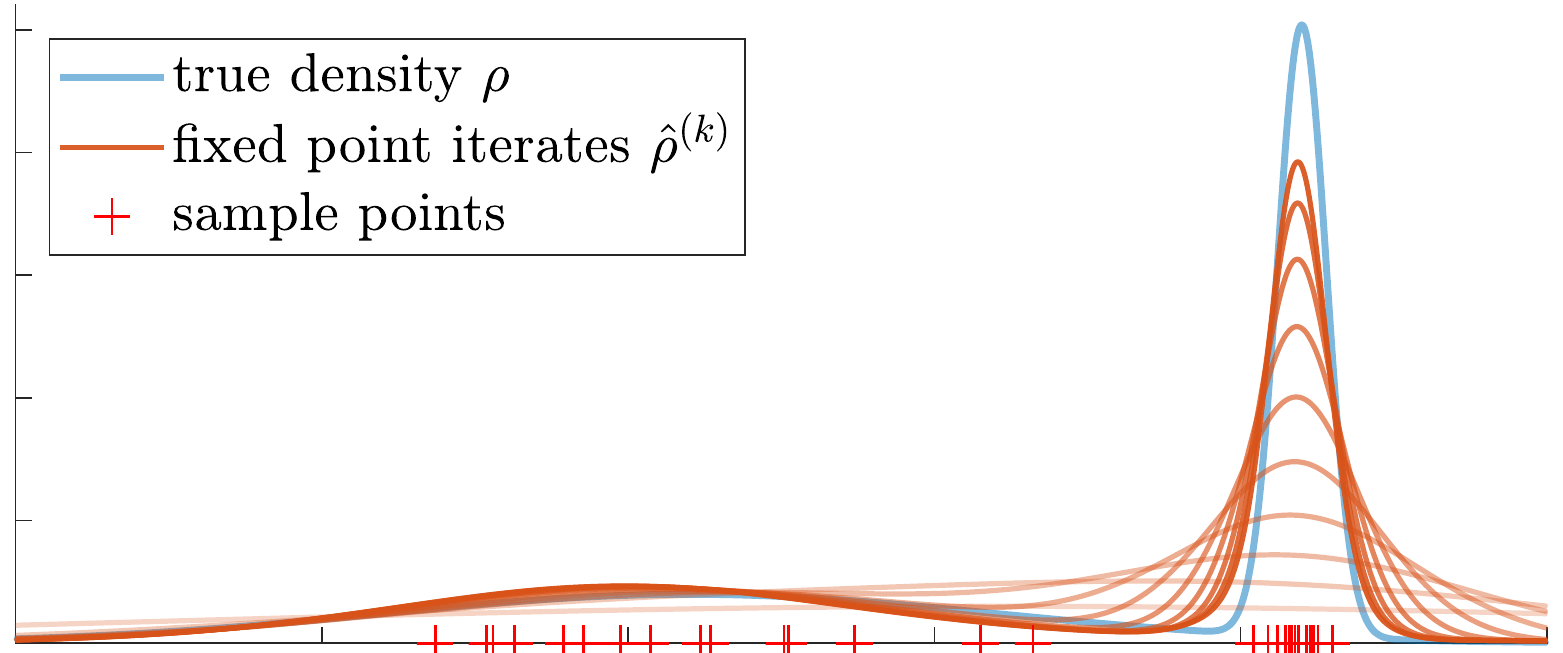}
\caption{The fixed point iteration \eqref{equ:densityFixedPointIteration} for the law \eqref{equ:hLaw} with $\beta = 1$ and $N=30$ sample points $y_n\stackrel{\rm iid}{\sim}\rho$ from the true density $\rho$
started with wide and equal bandwidths of the kernels. Ten iterates are plotted with increasing saturation value. The proportionality constant in \eqref{equ:hLaw} was chosen manually.
}
\label{fig:densityFixedPointIteration}
\end{figure}
As visualized in Figure \ref{fig:densityFixedPointIteration} for the law
\eqref{equ:hLaw}, it performs far better then just a pilot estimate (which
corresponds to the first step of the iteration) and is self-consistent in the
sense that the fixed point $\mth^\ast$ fulfills 
\[
\mth_n^{\ast} =\Phi_N(\rho_{\mth^{\ast}},y_n),
\quad
n=1,\dots,N,
\]
which is similar to the defining property of the solve-the-equation bandwidth selector.\footnote{To be more precise, solve-the-equation bandwidth selectors have the general form $\mth_n^{\ast} =\Phi_N(\rho_{\mtg(\mth^{\ast})},y_n)$, since bandwidths $\mth^\ast$ that are favorable for the estimation of $\rho$ are not necessarily suitable for the estimation of $\Phi_N(\rho,y_n)$, see \cite{jones1996brief}.
We will not deal with this issue here, but it is a promising direction for future research.}
The convergence properties of such fixed point iterations in dependence of the map $\Phi_N$ are still to be analyzed.

\section{Scaling Axioms and Choice of the Bandwidths}
\label{section:ScalingConditionsAndChoice}

In this section, we will introduce certain invariance axioms we want our
sample-point estimator to fulfill.
We will then derive a law for the bandwidths $h_n$ which satisfies these
axioms in Section \ref{section:BandwidthsChoice}.
Apart from requiring invariance under
shifting, we will generalize the simple scaling condition
\eqref{equ:simpleScalingEstimator} from positive factors $\alpha$ to invertible
matrices $A$ as in equation \eqref{equ:GeneralScalingEstimator} and, more
importantly, we will introduce the new argument sketched in the introduction which leads to yet another invariance axiom (see Axiom \ref{cond:ScalingConditions} (I2)).

The invariance axioms we formulate are analogues of the adaptation axioms in
\cite{2018arXiv180500703K} and we will make use of the adaptation function \eqref{equ:choiceMu}
introduced below, therefore the following subsection will be a short overview of
the theory of adaptive convolutions.

\subsection{Adaptive Convolutions}
\label{Section:AdaptiveConvo}

Smoothing a function $f\in W^{2,2}(\R^d)$ by a radially symmetric smoothing
kernel $g\in L^1(\R^d)$, the behavior of which varies strongly in space, often requires the possibility to control the amount of smoothing locally. This can be
realized by replacing the constant smoothing coefficient $\sigma >0$ in the
standard convolution,
\begin{equation}
(f\ast g_\sigma) (x)= \int f(y) \, g_\sigma(x-y)\, \mathrm dy\, ,
\qquad
g_\sigma(x) = \sigma^{-d} g\left(x/\sigma\right),
\end{equation}
by a (possibly matrix-valued) function $\mu:\R^d\to \mathrm{GL} (d,\R)$:
\begin{equation}
\label{equ:firstDef}
(f\ast_{\mu} g) (x):= \int f(y)\, |\det\mu(y)|\, g\big(\mu(y)(x-y)\big)\,
\mathrm dy\, .
\end{equation}
The theoretical framework for such \emph{adaptive convolutions} was developed in \cite{2018arXiv180500703K}, where also an implicit formula for the automatic choice of the so-called
\emph{adaptation function} $\mu$ in dependence of $f$ was derived,
\begin{equation}
\label{equ:choiceMu}
\mu_f^2(x) = \frac{\left( \nabla
f\nabla f^{\intercal} - f\, D^2 f\right) \ast
G_{(\lambda\mu_f)^{-2}(x)}^2}{(2-\lambda^2)\, f^2 \ast
G_{(\lambda\mu_f)^{-2}(x)}^2}(x)\, ,
\end{equation}
where $0<\lambda<\sqrt{2}$ and $G_\Sigma$ denotes the Gaussian function with
mean zero and covariance matrix $\Sigma$. This choice is motivated by certain phase
space transformations as well as the requirement to fulfill the following adaptation axioms, which ensure proper
behavior under shifting and scaling of $f$:
\begin{axiom}[Adaptation Axioms]
\label{cond:adaptation}
Let $\mathcal M = \{\mu\colon \R^d\to\gldr\colon \mu \text{ measurable}\}$.
We say that a mapping
$$
\mtm\colon W^{2,2}(\R^d,\R)\to \mathcal M,
\qquad
f\mapsto \mu_f,
$$
fulfills the \emph{Adaptation Axioms}, if for any $a\in\R^d$,
$\alpha\in\R\setminus\{0\}$, $A\in\gldr$, any parametrized function
$f^{(t)} = \sum_{k=1}^K f_k(\Cdot-a_k^{(t)})$, $t\ge 0,$ with $f_k\in
W^{2,2}(\R^d,\R)$, $a_k^{(t)}\in\R^d$, such that
$\|a_k^{(t)}-a_j^{(t)}\|\xrightarrow{t\to\infty}\infty$ for all $k\neq j$, and
any $x\in\R^d$,
\begin{enumerate}
  \item[(A1)]
  $\displaystyle
  \mu_{f(\Cdot\, - a)}(x) = \mu_f(x-a)$ \hfill (invariance under shifting),
  \item[(A2)]
  $\displaystyle \mu_{\alpha\cdot f} = \mu_f$ \hfill (invariance under scalar multiplication),
  \item[(A3)]
  $\displaystyle \mu_{f(A\cdot\, \Cdot)}^\intercal(x)\, \mu_{f(A\cdot\, \Cdot)}(x)
  =
  A^{\intercal}\, \mu_f^\intercal(A x)\,\mu_f(A x)\, A$
  \hfill (invariance under scaling),
  \item[(A4)]
  $\mu_{f^{(t)}}(x+a_k^{(t)}) \xrightarrow{t\to\infty}\mu_{f_k}(x)$ for all
  $k=1,\dots,K$
  \hfill
  (locality).
\end{enumerate}
\end{axiom}
Apart from these axioms, $\mu_f$ should measure in some sense the local variation of $f$, which is why the choice \eqref{equ:choiceMu} was derived by means of certain phase space transforms, see \cite{2018arXiv180500703K}.
As mentioned above, axioms (A1)--(A3) guarantee the invariance of the adaptive
convolution \eqref{equ:firstDef} under shifting and scaling of $f$. In
addition, if $f = \sum_{k=1}^K f_k$ is the sum of several functions
$f_1,\dots,f_K$ with `far apart' supports, (A4) ensures that it is
smoothed approximately the same way as these functions would have been smoothed
separately, $f\ast_{\mu_f} g \approx \sum_{k=1}^K f_k\ast_{\mu_{f_k}}g$.
These implications are summarized in the following proposition:
\begin{proposition}
\label{prop:AdaptationConditions}
Assuming Adaptation Axioms \ref{cond:adaptation} and adopting that
notation, we have for each  $f\in W^{2,2}(\R^d,\R)$, radially symmetric $g\in
L^p$ and $x\in\R^d$:
\begin{enumerate}[(i)]
  \item $\displaystyle (f(\Cdot\, - a)\ast^p_{\mu_{f(\Cdot\, - a)}} g) (x)=
  (f\ast^p_{\mu_f}g) (x-a)$
  \hfill
  \small (shifted function $\Rightarrow$ shifted convolution)\normalsize,
  \item $\displaystyle (\alpha f)\ast^p_{\mu_{\alpha f}} g =
  \alpha (f\ast^p_{\mu_f}g)$
  \hfill
  \small (stretched function $\Rightarrow$ stretched convolution)\normalsize,
  \item $\displaystyle (f(A\cdot\Cdot)\ast^p_{\mu_{f(A\cdot\Cdot)}} g) (x)=
  (f\ast^p_{\mu_f}g) (Ax)$
  \hfill
  \small (scaled function $\Rightarrow$ scaled convolution)\normalsize.
  \item $(f^{(t)}\ast_{\mu_{f^{(t)}}} g)(x) = \sum_{k=1}^K
  (f_k\ast_{\mu_{f_k}}^p g)(x-a_k^{(t)})$ asymptotically for $t\to\infty$,
  \\
  more precisely:  
  $(f^{(t)}\ast_{\mu_{f^{(t)}}}^p g)(x+a_k^{(t)})
  \xrightarrow{t\to\infty}(f_k\ast_{\mu_{f_k}}g)(x)$
  \hfill
  \small (locality)\normalsize.
\end{enumerate}
\end{proposition}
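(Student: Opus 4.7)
The plan is to derive all four statements directly from the defining formula \eqref{equ:firstDef} by a change of variables, invoking one of the adaptation axioms in each case and exploiting radial symmetry of $g$ whenever $\mu$ appears only through the quadratic form $\mu^\intercal\mu$. Recall that if $g$ is radially symmetric, $g\big(M(x-y)\big)$ depends on the matrix $M$ only through $(x-y)^\intercal M^\intercal M(x-y)$; this is precisely the object controlled by (A3), which only constrains $\mu_f^\intercal\mu_f$ and not $\mu_f$ itself. This observation is what makes the scaling axiom usable inside the convolution.

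For (i), I substitute $z = y-a$ in the integral defining the left-hand side and use (A1) to rewrite $\mu_{f(\Cdot-a)}(y)$ as $\mu_f(z)$; the result is $(f\ast_{\mu_f}g)(x-a)$. Part (ii) is immediate: (A2) gives $\mu_{\alpha f} = \mu_f$, and $\alpha$ factors out by linearity of the integral in $f$. For (iii), I substitute $z = Ay$ and pick up a Jacobian $|\det A|^{-1}$. Taking determinants in (A3) yields $|\det\mu_{f(A\Cdot)}(y)| = |\det A|\cdot|\det\mu_f(Ay)|$, which cancels the Jacobian. Radial symmetry converts the argument of $g$ into the quadratic form $(x-y)^\intercal\mu_{f(A\Cdot)}(y)^\intercal\mu_{f(A\Cdot)}(y)(x-y)$, which by (A3) equals $(Ax-Ay)^\intercal\mu_f(Ay)^\intercal\mu_f(Ay)(Ax-Ay)$, so the integrand becomes $g\big(\mu_f(Ay)(Ax - Ay)\big)$ and the integral reduces to $(f\ast_{\mu_f}g)(Ax)$.

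Part (iv) is the main obstacle. Writing $f^{(t)} = \sum_{j=1}^K f_j(\Cdot - a_j^{(t)})$ and evaluating at $x + a_k^{(t)}$ splits the convolution into $K$ integrals; in the $j$-th piece I substitute $y = z + a_j^{(t)}$, obtaining
\[
\int f_j(z)\,|\det\mu_{f^{(t)}}(z+a_j^{(t)})|\,g\!\left(\mu_{f^{(t)}}(z+a_j^{(t)})\big(x + a_k^{(t)} - a_j^{(t)} - z\big)\right)\,\mathrm dz.
\]
For $j=k$, (A4) yields pointwise convergence $\mu_{f^{(t)}}(z+a_k^{(t)})\to \mu_{f_k}(z)$, and the integrand tends pointwise to the one defining $(f_k\ast_{\mu_{f_k}}g)(x)$; dominated convergence then finishes this piece. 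For $j\neq k$, $\|a_k^{(t)}-a_j^{(t)}\|\to\infty$ while (A4) gives $\mu_{f^{(t)}}(z+a_j^{(t)})\to \mu_{f_j}(z)\in\gldr$, which is bounded away from singular matrices on the support of $f_j$; hence the argument of $g$ has norm tending to infinity uniformly in $z$, and the piece vanishes in the limit.

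The principal technical difficulty is therefore producing an integrable majorant to justify both dominated-convergence passages and obtaining the uniform blow-up in the $j\neq k$ pieces. Both require mild continuity and non-degeneracy of the maps $z\mapsto \mu_{f^{(t)}}(z+a_j^{(t)})$, uniform in $t$, on the (essential) support of $f_j$, together with decay of $g$ at infinity implicit in $g\in L^p$. Spelling out these regularity assumptions -- or, equivalently, restricting to radially symmetric $g\in L^1\cap L^\infty$ with $g(x)\to 0$ as $\|x\|\to\infty$ and to $\mu$ continuous on the supports of the $f_j$ -- would be the natural first step of the rigorous write-up.
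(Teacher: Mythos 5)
The paper does not actually prove this proposition --- its ``proof'' is a bare citation to \cite{2018arXiv180500703K} --- so there is no in-paper argument to compare yours against, and I can only judge your reconstruction on its own terms. Parts (i)--(iii) are correct and essentially complete: the change of variables, the cancellation of the Jacobian against $\abs{\det\mu_{f(A\cdot\,\Cdot)}(y)} = \abs{\det A}\,\abs{\det\mu_f(Ay)}$ obtained by taking determinants in (A3), and the observation that radial symmetry of $g$ lets you pass from the quadratic form $\mu^\intercal\mu$ (which is all (A3) constrains) back to an argument of $g$ are exactly the right ingredients, and (ii) is indeed immediate from (A2) plus linearity. One caveat: you work with the convolution \eqref{equ:firstDef}, i.e.\ effectively the case $p=1$, whereas the proposition is stated for $\ast^p$ with $g\in L^p$, a normalization the present paper never defines; for general $p$ the determinant bookkeeping in (iii) would have to be redone against the actual definition in the cited reference.

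Part (iv) has the right structure (split into $K$ pieces, recentre each at $a_j^{(t)}$, apply (A4) to the diagonal piece, and kill the off-diagonal pieces by driving the argument of $g$ to infinity), but, as you yourself concede, the limit interchanges are not justified under the stated hypotheses: membership in $L^p$ gives no pointwise decay of $g$ at infinity, (A4) is only a pointwise limit so the blow-up in the $j\neq k$ pieces is not automatically uniform over the support of $f_j$, no integrable majorant is exhibited for the dominated-convergence step, and you also implicitly use continuity of $g$ to pass $g(M_t v)\to g(Mv)$ when $M_t\to M$. These are precisely the regularity assumptions the cited reference must be supplying; flagging them explicitly rather than gliding over them is the right call, but it does mean your write-up of (iv) is a correct outline rather than a finished proof.
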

\begin{proof}
See \cite{2018arXiv180500703K}.
\end{proof}

\begin{proposition}
\label{prop:muFulfillsAdaptationConditions}
The adaptation function $\mu_f$ given by \eqref{equ:choiceMu} fulfills the Adaptation Axioms \ref{cond:adaptation}.
\end{proposition}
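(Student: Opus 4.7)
The plan is to verify each of the four axioms by substituting the relevant transformation of $f$ into the implicit defining equation \eqref{equ:choiceMu} and checking that the naturally conjectured $\mu$ for the transformed function is again a solution. Because \eqref{equ:choiceMu} is a fixed-point equation, I would first invoke uniqueness of its solution in the regularity class at hand (this, together with well-definedness, being the nontrivial analytic core established in \cite{2018arXiv180500703K}); once uniqueness is granted, each axiom reduces to exhibiting a candidate and plugging in. Throughout, I will write $N(f) := \nabla f\,\nabla f^\intercal - f\,D^2 f$ and $D(f):= (2-\lambda^2)\,f^2$ for the numerator and denominator integrands appearing in \eqref{equ:choiceMu}.

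Axioms (A1) and (A2) are almost immediate. For (A1), write $\tilde f(x) := f(x-a)$; pointwise one has $N(\tilde f)(x) = N(f)(x-a)$ and $D(\tilde f)(x) = D(f)(x-a)$, and since the Gaussian kernel $G^2_{(\lambda\mu)^{-2}(x)}$ is translation-invariant, plugging $\nu(x) := \mu_f(x-a)$ into the right-hand side of \eqref{equ:choiceMu} with $\tilde f$ in place of $f$ reproduces $\nu(x)^2$. By uniqueness, $\mu_{\tilde f}(x) = \mu_f(x-a)$. For (A2), $N(\alpha f) = \alpha^2 N(f)$ and $D(\alpha f) = \alpha^2 D(f)$, so the $\alpha^2$ cancels and $\mu_f$ itself solves the equation for $\alpha f$.

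Axiom (A3) is the technical core. For $\tilde f(x):=f(Ax)$, the chain rule gives $\nabla \tilde f(x) = A^\intercal \nabla f(Ax)$ and $D^2 \tilde f(x) = A^\intercal D^2 f(Ax) A$, whence
\[
N(\tilde f)(x) = A^\intercal N(f)(Ax)\,A, \qquad D(\tilde f)(x) = D(f)(Ax).
\]
Now take as candidate $\nu(x)$ any matrix with $\nu(x)^\intercal \nu(x) = A^\intercal \mu_f^\intercal(Ax)\mu_f(Ax)\,A$, so that the Gaussian covariance is $(\lambda\nu(x))^{-2} = \lambda^{-2} A^{-1}\mu_f(Ax)^{-2} A^{-\intercal}$, and perform the change of variables $z = Ay$ in both convolution integrals. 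The Jacobian $|\det A|^{-1}\,dz = dy$ combined with the Gaussian identity $G_{A^{-1}\Sigma A^{-\intercal}}(A^{-1}u) = |\det A|\,G_{\Sigma}(u)$ produces identical $|\det A|$-factors in the numerator and denominator of the right-hand side (this is where the \emph{square} on the Gaussian matters, producing a $|\det A|^{2}$ before the Jacobian eats one power); these factors cancel, leaving exactly the $A^\intercal(\cdots)A$ conjugation predicted by (A3). The careful bookkeeping of these determinant factors, together with confirming that the cancellation is the \emph{same} in numerator and denominator, is the step I expect to be the main obstacle; beyond that, (A3) is a direct calculation.

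Axiom (A4) is a concentration argument at each centre $a_k^{(t)}$. I would fix $k$ and change variables $x\mapsto x+a_k^{(t)}$ in both convolution integrals in \eqref{equ:choiceMu} for $f^{(t)}$. Since $G^2_{(\lambda\mu)^{-2}}$ has Gaussian tails, the contributions of the bumps $f_j(\cdot - a_j^{(t)})$ with $j\neq k$ are bounded by expressions containing $\exp(-c\|a_j^{(t)} - a_k^{(t)}\|^2)$ for some $c>0$, provided $\mu_{f^{(t)}}(x+a_k^{(t)})$ stays uniformly bounded away from singular as $t\to\infty$; this a priori boundedness, which must be established first, is the only delicate point, and follows from the nondegeneracy of $f_k$ together with the fixed-point structure. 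Passing to the limit, the equation \eqref{equ:choiceMu} at $x+a_k^{(t)}$ becomes the same equation evaluated at $x$ for $f_k$ alone, and uniqueness yields $\mu_{f^{(t)}}(x+a_k^{(t)})\to \mu_{f_k}(x)$, as desired.
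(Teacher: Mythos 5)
The paper does not actually prove this proposition: its ``proof'' is a pointer to \cite{2018arXiv180500703K}, so there is no in-paper argument to compare yours against. Your overall scheme --- treat \eqref{equ:choiceMu} as an implicit equation, assume well-posedness, and verify each axiom by exhibiting the conjectured $\mu$ as a solution --- is the natural one, and your computations for (A1)--(A3) are correct: the numerator and denominator of \eqref{equ:choiceMu} are both quadratic in $f$, which gives (A2); translation equivariance of all ingredients gives (A1); and for (A3) the chain rule, the substitution $z=Ay$, and the identity $G_{A^{-1}\Sigma A^{-\intercal}}(A^{-1}u)=\abs{\det A}\,G_\Sigma(u)$ do produce a common factor $\abs{\det A}^{2}\cdot\abs{\det A}^{-1}$ in numerator and denominator that cancels, leaving exactly $\nu^\intercal\nu = A^\intercal\mu_f^\intercal(Ax)\mu_f(Ax)A$ (note that (A3) is deliberately stated only for $\mu^\intercal\mu$, consistent with your observation that the candidate $\nu$ is determined only up to an orthogonal factor).

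There are, however, two points where your argument is not yet a proof. First, everything rests on existence and uniqueness of the solution of the implicit equation \eqref{equ:choiceMu} in a class closed under the transformations considered; you correctly flag this but it is the analytic substance of the cited reference, not something that can be ``invoked.'' Second, and more seriously, your treatment of (A4) conflates convergence of the \emph{equations} with convergence of their \emph{solutions}. Showing that the fixed-point equation for $f^{(t)}$, recentred at $a_k^{(t)}$, tends to the equation for $f_k$ does not by itself yield $\mu_{f^{(t)}}(x+a_k^{(t)})\to\mu_{f_k}(x)$; you need a stability (continuous dependence) statement for the fixed point, together with the a priori nondegeneracy bound you mention --- which you assert ``follows from the fixed-point structure'' without argument. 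In addition, functions in $W^{2,2}(\R^d)$ need not have compact support, so the cross terms $\nabla f_j(\cdot-a_j^{(t)})\nabla f_k(\cdot-a_k^{(t)})^\intercal$ and $f_j(\cdot-a_j^{(t)})\,D^2f_k(\cdot-a_k^{(t)})$ in the numerator must be controlled via $L^1$/$L^2$ separation estimates rather than by Gaussian tails alone. These are exactly the steps the paper outsources to \cite{2018arXiv180500703K}, so your proposal should either reproduce them or cite them explicitly rather than treat them as routine.
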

\begin{proof}
See \cite{2018arXiv180500703K}.
\end{proof}

\begin{figure}[H]
        \centering
		\includegraphics[width=1\textwidth]{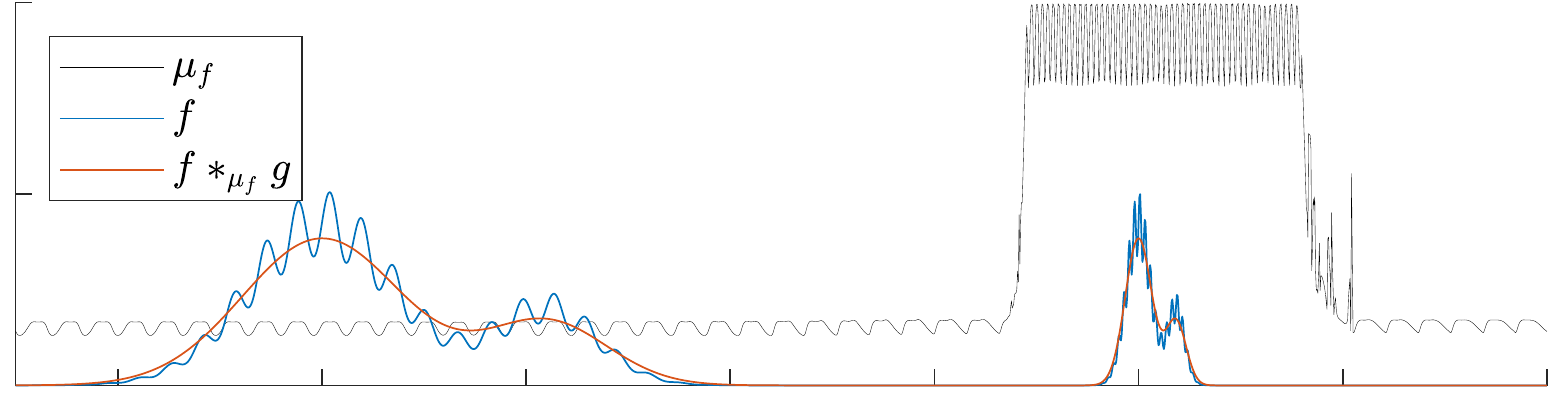}
        \caption{$\mu_f$ given by \eqref{equ:choiceMu} describes locally the variation of $f$. Choosing it as an adaptation function for the convolution of $f$ and $g$ (here, $g$ is a Gaussian function) yields a proper local scaling of $g$ and thereby an adequate smoothing of $f$ everywhere.}
        \label{fig:convolution6und7und8}
\end{figure}

There are at least three reasons why one should consider adaptive convolutions when dealing with VKDE:
\begin{compactitem}
\item
The Adaptation Axioms \ref{cond:adaptation} are a good starting point for the
formulation of our scaling axioms (however, as we will see in Section
\ref{section:ScalingConditions}, these axioms have to be adopted with care
to the VKDE setup).
\item
Due to the properties of the adaptation function $\mu_f$ (see Proposition
\ref{prop:muFulfillsAdaptationConditions}), it is a good starting point for the choice of the bandwidths $h_n$.
\item
Just as the standard KDE \eqref{equ:kde} converges (almost surely) to the (standard) convolution of the density $\rho$ and the kernel $K_h$ by the law of large numbers,
\begin{align*}
\hat{\rho}(x)
&=
\frac{1}{N}\sum_{n=1}^{N} K_h\left(x-y_n\right)
\xrightarrow{N\to\infty}
(\rho\ast K_h) (x),
\intertext{the VKDE \eqref{equ:rhoV} converges to their adaptive convolution
with adaptation function $h^{-1}$,}
\hat{\rho}_V(x)
&=
\frac{1}{N}\sum_{n=1}^{N} \abs{\det h(y_n)}^{-1} K\left(h(y_n)^{-1}(x-y_n)\right)
\xrightarrow{N\to\infty}
(\rho\ast_{h^{-1}} K) (x)
\end{align*}
(in both cases we assumed that $h$ is chosen independently from $N$ and that
$h_n = h(y_n)$ for some function $h\colon \R^n\to\gldr$ in the second case).
Therefore, adaptive convolutions are an important theoretical tool for the
analysis of VKDE.
\end{compactitem}
%
%
%
%

\subsection{Invariance Axioms}
\label{section:ScalingConditions}

While the choice \eqref{equ:hLaw} for $\beta = 1/d$ behaves well under scaling with a factor $\alpha>0$, the property \eqref{equ:simpleScalingEstimator} does not generalize to scaling with arbitrary invertible matrices $A\in\gldr$ as formulated in equation \eqref{equ:GeneralScalingEstimator}.
In order to get proper scaling properties in higher dimensions, we will
therefore formulate axioms analogous to the Adaptation Axioms
\ref{cond:adaptation}, before finding a better law for the bandwidths than \eqref{equ:hLaw}.
Some caution is advised concerning the translation of these axioms to the VKDE setup: the choice $h_n\propto \mu_\rho^{-1}(y_n)$ appears natural, since $\mu_\rho$ describes the local variation of $\rho$. However, $\mu_\rho$ fails to depend on the number of sample points $y_n$ that lie in a certain region, as illustrated by the following example:

\begin{example}
Consider a density of the form
\[
\rho(x) = \frac{1}{3}\rho_1(x) + \frac{2}{3}\rho_2(x),
\qquad
\rho_2(x) = \rho_1(x-a),
\]
where $\rho_1\colon \R\to\R$ is a density with bounded support and the shift $a\in\R$ clearly separates $\rho_1$ and $\rho_2$ in space.
Naturally, there will be roughly twice as many points in the support of $\rho_2$
as in the one of $\rho_1$ and the kernels can be chosen more peaked in the
support of $\rho_2$ (see e.g. the dependence of $h_\opt$ on the number of points
in \eqref{equ:hDependsOnN}). However, choosing $h_n\propto \mu_\rho^{-1}(y_n)$
would force the kernels in the two regions to have similar bandwidths by Adaptation Axiom
\ref{cond:adaptation} (A2)!
\end{example}
In order to account for this crucial difference between adaptive convolutions
and VKDE, we will have to essentially modify Adaptation Axiom
\ref{cond:adaptation} (A2) (even though such a condition might seem rather
artificial for normalized densities).
The new choice relies on the dependence of $h_n$ on the number of sample
points $N$, which we will \emph{assume} to be of the form \eqref{equ:hDependsOnN}. More precisely, we presume
\begin{equation}
\label{equ:mDependsOnN}
\framebox[4.5cm][c]{
$\displaystyle 
h_n \stackrel{!}{=} N^{-1/(d+4)} \Phi(\rho,y_n).
$
}
\end{equation}
\begin{axiom}[Invariance Axioms]
\label{cond:ScalingConditions}
A map $\Phi\colon C^2\cap L^2(\R^d)\times\R^d\to \R^{d\times d}$
is said to fulfill the \emph{Invariance Axioms},
if for any $\rho\in C^2\cap L^2(\R^d)$, $a\in\R^d$,
$\alpha\in\R\setminus\{0\}$, $A\in\gldr$, any parametrized function $\rho^{(t)}
= \sum_{k=1}^K \rho_k(\Cdot-a_k^{(t)})$, $t\ge 0,$ with $\rho_k\in
C^2\cap L^2(\R^d,\R)$, $a_k^{(t)}\in\R^d$, such that
$\|a_k^{(t)}-a_\ell^{(t)}\|\xrightarrow{t\to\infty}\infty$ for all $k\neq\ell$,
and any $y\in\R^d$,

\begin{enumerate}
  \item[(I1)]
  $\displaystyle
  \Phi(\rho(\Cdot\, - a),y+a) = \Phi(\rho,y)$
  \hfill
  (invariance under shifting),
  \item[(I2)]
  $\displaystyle
  \Phi(\alpha\cdot\rho,y) =   \alpha^{-1/(d+4)}\, \Phi(\rho,y)$
  \hfill
  (invariance under scalar multiplication),
  \item[(I3)]  
  $\displaystyle \phi_2 \phi_2^\intercal = A^{-1} \phi_1\phi_1^\intercal A^{-\intercal}$
  \hfill (invariance under scaling),\\[0.2cm]
  where $\phi_1 := \Phi(\rho,y),\ \phi_2 := \Phi\big(\abs{\det A}\, \rho(A\cdot\, \Cdot),A^{-1}y\big)$,
  \item[(I4)]
  $\Phi(\rho^{(t)},y+a_k^{(t)})  \xrightarrow{t\to\infty}  \Phi(\rho_k,y)$ for
  each $k=1,\dots,K$
  \hfill
  (locality).  
\end{enumerate}
\end{axiom}

\begin{figure}[H]
        \centering
        \begin{subfigure}[b]{0.8\textwidth}
                \centering
                \includegraphics[width=1\textwidth]{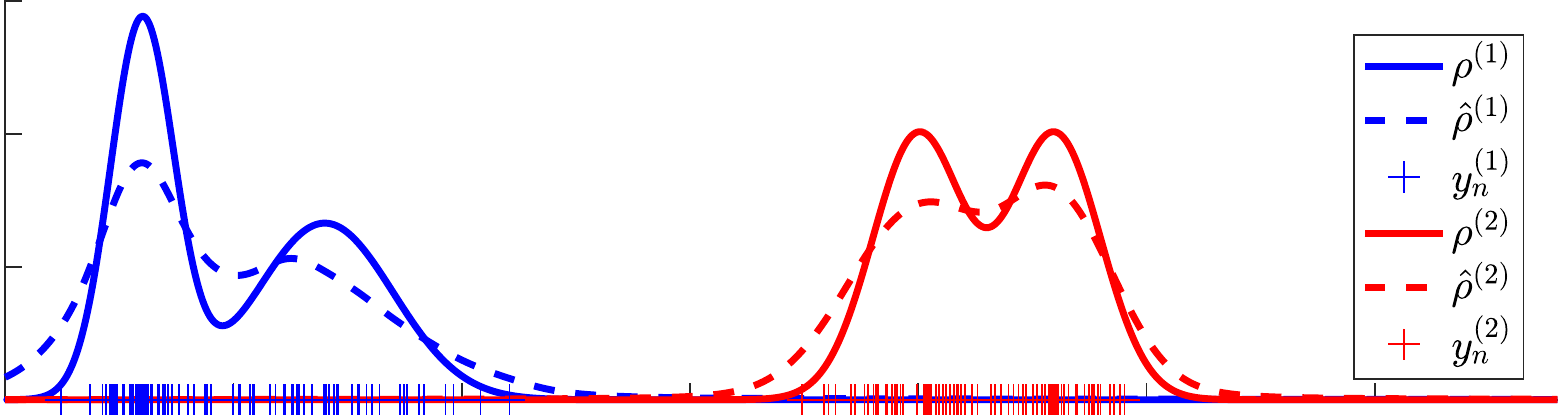}
                \end{subfigure}%
        \vspace{0.2cm}
        \begin{subfigure}[b]{0.8\textwidth}
                \centering
                \includegraphics[width=1\textwidth]{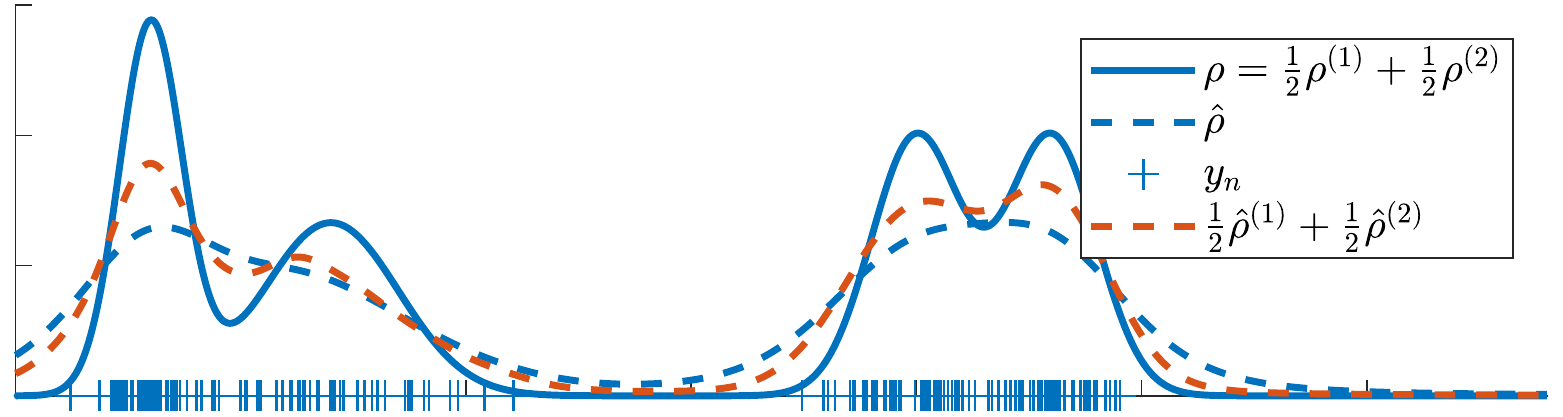}
        \end{subfigure}        
        \caption{The law \eqref{equ:hLaw} for $\beta = 1/d = 1$ applied to
        $\rho^{(1)}$ and $\rho^{(2)}$ separately (top) and $\rho = \rho^{(1)}/2 +
        \rho^{(2)}/2$ (bottom).
        We observe oversmoothing of the estimate $\hat\rho_\mth$ and inconsistency
        to the estimates $\hat \rho_{\mth^{(1)}}^{(1)},\, \hat
        \rho_{\mth^{(2)}}^{(2)}$. This effect can be amplified by making the
        density $\rho$ consist of more than two `well-separated parts'. Here,
        the proportionality constant in \eqref{equ:hLaw} was chosen manually.       
        }
        \label{fig:TwoDensitiesPlot}
\end{figure}

The reasoning for Invariance Axiom \ref{cond:ScalingConditions} (I2) has been
introduced in Section \ref{section:Introduction} and will now be discussed in detail.
Let $\rho^{(1)},\rho^{(2)}\in L^1(\R^d)$ be two probability densities with disjoint and `far-apart' supports $\Omega_1,\Omega_2\subset \R^d$ and $\rho^{(0)}(x) = \alpha\rho^{(1)}(x) + (1-\alpha)\rho^{(2)}(x)$, $\alpha\in[0,1]$.
Let $\cY^{(0)} = (y_1,\dots,y_{N_0})$ be independent $\rho^{(0)}$-distributed sample points and (after reordering) $\cY^{(1)} = \cY\cap\Omega_1=(y_1,\dots,y_{N_1})$ be those in $\Omega_1$. Then $\cY^{(1)}$ is $\rho^{(1)}$-distributed and (asymptotically) $N_1 \approx \alpha N_0$.
Assume we found suitable bandwidths $\mth^{(1)}$ for $\hat\rho_{\mth^{(1)}}^{(1)}$ based on $\cY^{(1)}$. Then, if we want to get an analogous result for  $\hat\rho_{\mth^{(0)}}^{(0)}$ in $\Omega_1$, $\hat\rho_{\mth^{(0)}}^{(0)}|_{\Omega_1} \approx \alpha\, \hat\rho_{\mth^{(1)}}^{(1)}$, and if we can neglect the influence of $\rho^{(2)}$ and $\cY^{(0)}\setminus \cY^{(1)}$ on $\hat\rho_{\mth^{(0)}}^{(0)}|_{\Omega_1}$, we have to require $h_n^{(0)}\approx h_n^{(1)}, \ n=1,\dots,N_1$, for the bandwidths $h_n^{(0)}$ of $\hat\rho_{\mth^{(0)}}^{(0)}$.
Since $N_1$ is different from $N_0$, we need to compensate for
\eqref{equ:mDependsOnN},
\[
N_1^{-\frac{1}{d+4}} \Phi(\rho^{(1)},y_n)
=
h_n^{(1)}
\stackrel{!}{\approx}
h_n^{(0)}
=
N_0^{-\frac{1}{d+4}} \Phi(\rho^{(0)},y_n)
=
{\underbrace{\left(\frac{N_1}{N_0}\right)}_{\approx\, \alpha}}^{\frac{1}{d+4}}
N_1^{-\frac{1}{d+4}} \Phi(\alpha\rho^{(1)},y_n),
\]
which is exactly what Invariance Axiom \ref{cond:ScalingConditions} (I2)
guarantees:
\[
\Phi(\alpha\rho^{(1)},y_n) = \alpha^{-1/(d+4)}\Phi(\rho^{(1)},y_n).
\]
This idea is visualized in Figure \ref{fig:TwoDensitiesPlot} and formulated more
rigorously in the following theorem, together with invariance of the
corresponding VKDE under shifting and scaling. 

Since our VKDE estimates are now based on different sampling families $\cY =
(y_1,\dots,y_N)$ and different bandwidths $\mth = (h_1,\dots,h_N)$, we
introduce the slightly more specific notation $\MM_K[\cY,\mth]$ instead of
$\hat\rho_\mth$,
\[
\hat\rho_\mth
=
{\rm MM}_K[\cY,\mth]
:=
\frac{1}{N}\sum_{n=1}^{N} \abs{\det h_n}^{-1}\, K\left(h_n^{-1}(\Cdot -y_n)\right),
\]
where $\MM_K$ stands for `mixture model with kernel K'.

\begin{theorem}
\label{theorem:InvariancePropertiesVKDE}
Let $\Phi\colon C^2\cap L^2(\R^d)\times\R^d\to \R^{d\times d}$ fulfill the Invariance Axioms \ref{cond:ScalingConditions} and
$K\in C^2(\R^d)$ be a radially symmetric probability density as in
\eqref{equ:radialKernel}.
\begin{enumerate}[(i)]
\item
Let $\rho_1\in C^2\cap L^2(\R^d)$ be a probability density, $a\in\R^d$, $A\in\gldr$
and
\[
\rho_2(x) = \rho_1(x-a),
\qquad
\rho_3(x) = \abs{\det A}\, \rho_1(Ax).
\]
Further, for $j=1,2,3$ let $\hat\rho_j = \MM_K\big[\cY_j,\mth_j\big]$, where
$\cY_j = (y_{j,n},\dots,y_{j,N})$ and $y_{1,n}\in\R^d,\ y_{2,n} = y_{1,n}+a,\
y_{3,n} = A^{-1}y_{1,n}$, $h_{j,n} = N^{-1/(d+4)}\, \Phi(\rho_j,y_{j,n})$, $n=1,\dots,N$. Then
\[
\hat\rho_2(x) = \hat\rho_1(x-a),
\qquad
\hat\rho_3(x) = \abs{\det A}\, \hat\rho_1(Ax).
\]
\item
For $k=1,\dots,K$, let $\rho_k\in C^2\cap L^2(\R^d,\R)$ be densities
and $\cY_k = (y_{k,1},\dots,y_{k,N_k})$.
Further, let $N_0 = \sum_{k=1}^K N_k$, $\alpha_k = N_k/N_0$, $\cY_0 :=
\bigcup_{k=1}^K (a_k^{(t)}+Y_k)$, let $a_k^{(t)}\in\R^d$, $t\ge 0$, such that
$\|a_k^{(t)}-a_\ell^{(t)}\|\xrightarrow{t\to\infty}\infty$ for all $k\neq
\ell$, and
\[
\rho_0 = \sum_{k=1}^K \alpha_k\rho_k(\Cdot-a_k^{(t)}).
\]
%
Finally, for $k=0,\dots,K$, let $h_{k,n} = N_k^{-1/(d+4)}\Phi(\rho_k,y_{k,n}),\
n=1,\dots,N_k$ and $\hat\rho_k = \MM_K[\cY_k,\mth_k]$.
Then, asymptotically for $t\to\infty$, 
\[
\hat \rho_0 = \sum_{k=1}^K \alpha_k\hat\rho_k(\Cdot-a_k^{(t)}).
\]
More precisely, for each $x\in\R^d$ and $k=1,\dots,K$,
$
\hat \rho_0(x+a_k^{(t)})
\xrightarrow{t\to\infty}
\alpha_k \hat\rho_k(x).
$
\end{enumerate}
\end{theorem}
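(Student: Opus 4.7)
The plan is to handle parts (i) and (ii) separately, using the Invariance Axioms one at a time and exploiting the radial symmetry of $K$ to pass from bandwidth matrices $h$ to their Gram matrices $h h^\intercal$.

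For part (i), the shifting claim is essentially immediate: Axiom (I1) gives $\Phi(\rho_2,y_{2,n}) = \Phi(\rho_1(\Cdot-a),y_{1,n}+a) = \Phi(\rho_1,y_{1,n})$, so $h_{2,n} = h_{1,n}$, and a direct substitution $x - y_{2,n} = (x-a) - y_{1,n}$ inside each summand yields $\hat\rho_2(x) = \hat\rho_1(x-a)$. For the scaling claim, I would first rewrite $x - y_{3,n} = A^{-1}(Ax - y_{1,n})$. Since $K(z) = \gamma(\|z\|_2^2)$ is radially symmetric, $K(Mz)$ depends on $M$ only through $M^\intercal M$; hence
\[
K\bigl(h_{3,n}^{-1} A^{-1}(Ax-y_{1,n})\bigr) = K\bigl(h_{1,n}^{-1}(Ax-y_{1,n})\bigr)
\]
provided $h_{3,n} h_{3,n}^\intercal = A^{-1} h_{1,n} h_{1,n}^\intercal A^{-\intercal}$, which is exactly Axiom (I3) after canceling the common factor $N^{-2/(d+4)}$. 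Taking determinants gives $|\det h_{3,n}| = |\det A|^{-1} |\det h_{1,n}|$, which supplies the needed $|\det A|$ in front, so that $\hat\rho_3(x) = |\det A|\,\hat\rho_1(Ax)$.

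For part (ii), I would split the sum defining $\hat\rho_0(x+a_k^{(t)})$ according to which cluster each sample belongs to and show that only the $j=k$ block survives as $t\to\infty$. The key point is that the summands of $\rho_0$ are $\alpha_j\,\rho_j(\Cdot-a_j^{(t)})$, not $\rho_j(\Cdot-a_j^{(t)})$, so one must chain (I4) with (I2):
\[
\Phi(\rho_0,y_{k,n}+a_k^{(t)}) \xrightarrow{t\to\infty} \Phi(\alpha_k\rho_k,y_{k,n}) = \alpha_k^{-1/(d+4)}\,\Phi(\rho_k,y_{k,n}).
\]
Combined with $\alpha_k = N_k/N_0$, this gives $h_{0,k,n} \to (\alpha_k N_0)^{-1/(d+4)}\Phi(\rho_k,y_{k,n}) = h_{k,n}$. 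The diagonal block then converges to $\frac{1}{N_0}\sum_{n=1}^{N_k} |\det h_{k,n}|^{-1} K(h_{k,n}^{-1}(x-y_{k,n})) = \alpha_k\,\hat\rho_k(x)$, which is precisely the target.

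The main obstacle is ruling out the off-diagonal contributions ($j\neq k$). For these, the kernel argument $h_{0,j,n}^{-1}\bigl(x + a_k^{(t)} - a_j^{(t)} - y_{j,n}\bigr)$ has norm tending to infinity, because $\|a_k^{(t)}-a_j^{(t)}\|\to\infty$ while $h_{0,j,n}$ converges (by the same (I4)+(I2) chain applied at cluster $j$) to the finite invertible matrix $h_{j,n}$. Invoking the mild decay $K(z)\to 0$ as $\|z\|\to\infty$, which holds since $K\in C^2$ is a radially symmetric probability density, each such summand vanishes in the limit; since there are only finitely many $(j,n)$ with $j\neq k$, their combined contribution to $\hat\rho_0(x+a_k^{(t)})$ vanishes as well, completing the proof. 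This tail-decay step is the only one that goes beyond purely algebraic manipulation of the axioms.
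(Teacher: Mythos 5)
Your proposal is correct and follows essentially the same route as the paper: Axiom (I1) plus a substitution for the shift, Axiom (I3) combined with the radial symmetry $K(x)=\gamma(\norm{x}_2^2)$ and the resulting determinant identity for the scaling claim, and the chain (I4) followed by (I2) to get $h_{0,\nu}\xrightarrow{t\to\infty}h_{k,n}$ together with $K(z)\to 0$ as $\norm{z}\to\infty$ for part (ii). Your handling of the off-diagonal ($j\neq k$) blocks is in fact slightly more explicit than the paper's, which compresses that step into the single remark that $K$ vanishes at infinity.
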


\begin{proof}
\begin{enumerate}[(i)]
\item
The proof of $\hat\rho_2(x) = \hat\rho_1(x-a)$ is straightforward.
Invariance Axiom \ref{cond:ScalingConditions} (I3) implies
$h_{3,n}^{\vphantom{\intercal}} h_{3,n}^\intercal = A^{-1}
h_{1,n}^{\vphantom{\intercal}} h^\intercal_{1,n} A^{-\intercal}$ and together
with equation \eqref{equ:radialKernel} this yields
\begin{align*}
\hat \rho_3(x)
&=
\frac{1}{N}\sum_{n=1}^{N}\abs{\det h_{3,n}^{-1}}\, 
\gamma\Big(\norm{h_{3,n}^{-1} (x-A^{-1}y_{1,n})}_2^2\Big)
\\
&=
\frac{\abs{\det A}}{N}\sum_{n=1}^{N}\abs{\det h_{1,n}^{-1}} 
\, \gamma\Big((x-A^{-1}y_{1,n})^\intercal A^\intercal h_{1,n}^{-\intercal}
h_{1,n}^{-1} A \, (x-A^{-1}y_{1,n})\Big)
\\
&=
\frac{\abs{\det A}}{N}\sum_{n=1}^{N}\abs{\det h_{1,n}^{-1}}
\, \gamma\Big(\norm{h_{1,n}^{-1} (Ax-y_{1,n})}_2^2\Big)
\\
&=\abs{\det A}\, \hat \rho_1(Ax).
\end{align*}
\item
Let $\nu=1,\dots,N_0$ and $y_{0,\nu}\in\cY_0$, i.e. $y_{0,\nu} = a_k^{(t)} +
y_{k,n}$ for some $k=1,\dots,K$, $n=1,\dots,N_k$. Then Invariance Axioms
\ref{cond:ScalingConditions} (I2) and (I4) imply
\[
h_{0,\nu}
=
N_0^{-1/(d+4)}\Phi(\rho_0,y_{0,\nu})
\xrightarrow{t\to\infty}
N_0^{-1/(d+4)}\Phi(\alpha_k\rho_k,y_{k,n})
=
N_k^{-1/(d+4)}\Phi(\rho_k,y_{k,n})
=
h_{k,n}.
\]
Since $K(x)\to 0$ for $\norm{x}\to\infty$, we obtain for $k=1,\dots,K$:
\begin{align*}
\hat \rho_0(x+a_k^{(t)})
=\ 
&\frac{1}{N_0}\sum_{\nu=1}^{N_0} \abs{\det h_{0,\nu}}^{-1}\,
K\left(h_{0,\nu}^{-1}(x+a_k^{(t)}-y_{0,\nu})\right)
\\
\xrightarrow{t\to\infty}\ 
&\frac{\alpha_k}{N_k} \sum_{n=1}^{N_k} \abs{\det
h_{k,n}}^{-1}\, K\left(h_{k,n}^{-1}(x-y_{k,n})\right)
\hspace{0.2cm}
=
\alpha_k \hat\rho_k(x).
\end{align*}
\end{enumerate}
\end{proof}

\begin{remark}
\label{rem:InvariancePropertiesVKDE}
Theorem \ref{theorem:InvariancePropertiesVKDE} can be summarized as follows:
\\
The Invariance Axioms \ref{cond:ScalingConditions} guarantee invariance of the
VKDE under shifting and scaling of the original density (and,
correspondingly, the sample points) and, if a density $\rho_0$ is a convex
combination of densities $\rho_k,\ k=1,\dots,K$, with `far apart' supports, its VKDE is approximately the convex combination of the single VKDE's (based on the sampling points lying in the corresponding
supports),
\[
\rho_0
=
\sum_{k=1}^{K}
\alpha_k\, \rho_k
\qquad
\text{implies}
\qquad
\hat\rho_0
\approx
\sum_{k=1}^{K}
\alpha_k\, \hat\rho_k.
\]
\end{remark}

\subsection{Choice of the Bandwidths}
\label{section:BandwidthsChoice}
In view of Proposition \ref{prop:muFulfillsAdaptationConditions} it is tempting
to set $h_n \propto N^{-1/(d+4)}\mu_\rho^{-1}(y_n)$, however, in order to
account for the essential difference between adaptive convolutions and VKDE
mentioned above and characterized by Invariance Axioms
\ref{cond:ScalingConditions} (I2), this choice has to be adjusted in the
following way:
\begin{theorem}
Let $\kappa>0$. The following choice for the bandwidths fulfills the Invariance
Axioms \ref{cond:ScalingConditions}:
\begin{equation}
\label{equ:FinalChoiceBandwidths}
\framebox[11.4cm][c]{
$\displaystyle 
h_n = N^{-1/(d+4)} \Phi(\rho,y_n),
\qquad
\Phi(\rho,y) = \abs{\frac{\kappa\, \rho(y)}{\det \mu_\rho (y)}}^{-1/(d+4)}
\mu_\rho^{-1} (y),
$
}
\end{equation}
\end{theorem}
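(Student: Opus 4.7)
The plan is to verify the four Invariance Axioms (I1)--(I4) for $\Phi$ one at a time, leveraging Proposition \ref{prop:muFulfillsAdaptationConditions}, which tells us that the adaptation function $\mu_\rho$ from \eqref{equ:choiceMu} already satisfies the Adaptation Axioms (A1)--(A4). It is convenient to decompose $\Phi(\rho,y) = c(\rho,y)\, \mu_\rho^{-1}(y)$ with scalar prefactor
\[
c(\rho,y) = \abs{\frac{\kappa\,\rho(y)}{\det \mu_\rho(y)}}^{-1/(d+4)},
\]
and to check the transformation of the matrix factor and of the prefactor separately under each of the four operations.

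Axioms (I1) and (I2) will be essentially immediate. For (I1), (A1) gives $\mu_{\rho(\Cdot - a)}(y+a) = \mu_\rho(y)$, and the evaluation $\rho(\Cdot - a)(y+a) = \rho(y)$ makes the prefactor invariant as well. For (I2), (A2) yields $\mu_{\alpha\rho} = \mu_\rho$, so the matrix factor is unchanged, while the numerator of $c$ picks up an extra factor of $\alpha$, producing $c(\alpha\rho,y) = \alpha^{-1/(d+4)}\, c(\rho,y)$ (with the understanding $\alpha > 0$, as in the motivating convex-combination setting).

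The main work is in (I3), which is also the step I expect to be the principal obstacle, since it is where the specific form of the scalar prefactor earns its keep. Write $\tilde\rho(x) := \abs{\det A}\,\rho(Ax)$. By (A2) $\mu_{\tilde\rho} = \mu_{\rho(A\,\Cdot)}$, and (A3) gives
\[
\mu_{\rho(A\,\Cdot)}^\intercal(x)\, \mu_{\rho(A\,\Cdot)}(x)
= A^\intercal\, \mu_\rho^\intercal(Ax)\, \mu_\rho(Ax)\, A.
\]
Setting $M_1 := \mu_\rho(y)$ and $M_2 := \mu_{\tilde\rho}(A^{-1}y)$, taking determinants yields $\abs{\det M_2} = \abs{\det A}\,\abs{\det M_1}$, while $\tilde\rho(A^{-1}y) = \abs{\det A}\,\rho(y)$. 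The two factors $\abs{\det A}$ cancel in $c$, giving $c(\tilde\rho, A^{-1}y) = c(\rho,y)$. Inverting the displayed identity produces $M_2^{-1} M_2^{-\intercal} = A^{-1} M_1^{-1} M_1^{-\intercal} A^{-\intercal}$, and multiplying by the common scalar $c(\rho,y)^2$ delivers exactly $\phi_2 \phi_2^\intercal = A^{-1}\phi_1 \phi_1^\intercal A^{-\intercal}$.

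Finally, (I4) should fall out of (A4). The latter provides $\mu_{\rho^{(t)}}(y+a_k^{(t)}) \to \mu_{\rho_k}(y)$, hence also convergence of $\det\mu_{\rho^{(t)}}(y+a_k^{(t)})$ to $\det \mu_{\rho_k}(y)$. Under the separation hypothesis $\norm{a_k^{(t)} - a_\ell^{(t)}} \to \infty$ used already in (A4), the cross-terms $\rho_j(y + a_k^{(t)} - a_j^{(t)})$ with $j \neq k$ vanish in the limit, so $\rho^{(t)}(y+a_k^{(t)}) \to \rho_k(y)$. Both the scalar prefactor and the matrix factor in $\Phi$ therefore converge to their counterparts evaluated at $\rho_k$, completing the verification. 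The structural point that makes everything fit together is that the normalization $\rho(y)/\det\mu_\rho(y)$ inside $c$ is exactly the unique scalar combination whose $\abs{\det A}$-dependences cancel under (A3), and whose $\alpha$-dependence under (A2) produces the factor $\alpha^{-1/(d+4)}$ demanded by (I2).
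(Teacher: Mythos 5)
Your proposal is correct and follows essentially the same route as the paper: it verifies (I1)--(I4) one at a time by combining the Adaptation Axioms (A1)--(A4) for $\mu_\rho$ with the transformation behavior of the scalar prefactor $\abs{\kappa\rho(y)/\det\mu_\rho(y)}^{-1/(d+4)}$, with the cancellation of the $\abs{\det A}$ factors in (I3) being the key step in both arguments. Your derivation of $\abs{\det\mu_{\tilde\rho}(A^{-1}y)} = \abs{\det A}\,\abs{\det\mu_\rho(y)}$ by taking determinants in (A3) is in fact slightly more careful than the paper's direct assertion, and your remark that (I2) requires $\alpha>0$ for the stated identity is a legitimate observation that applies to the paper's proof as well.
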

\begin{proof}
The proof follows directly from Proposition
\ref{prop:muFulfillsAdaptationConditions} (we adopt the notation from the
Invariance Axioms \ref{cond:ScalingConditions}):

\begin{align*}
\Phi(\rho(\Cdot\, - a),y+a)
=\ &
\abs{\frac{\kappa\, \rho(y_n)}{\det \mu_{\rho(\Cdot\, - a)} (y_n+a)}}^{-\frac{1}{d+4}}
\mu_{\rho(\Cdot\, - a)}^{-1} (y_n+a)
=
\abs{\frac{\kappa\, \rho(y_n)}{\det \mu_\rho (y_n)}}^{-\frac{1}{d+4}}
\mu_\rho^{-1} (y_n)
\\ =\ &
\Phi(\rho,y),
\\[0.2cm]
\Phi(\alpha\cdot\rho,y)
=\ &
\abs{\frac{\kappa\, \alpha\, \rho(y_n)}{\det \mu_{\alpha\cdot\rho} (y_n)}}^{-\frac{1}{d+4}}
\mu_{\alpha\cdot\rho}^{-1} (y_n) 
=
\abs{\frac{\kappa\, \alpha\, \rho(y_n)}{\det \mu_{\rho} (y_n)}}^{-\frac{1}{d+4}}
\mu_{\alpha\cdot\rho}^{-1} (y_n)
=
\alpha^{-1/(d+4)}\, \Phi(\rho,y),
\end{align*}
\begin{align*}
\phi_2 \phi_2^\intercal
=\ &
\abs{\frac{\kappa\, \abs{\det A}\, \rho(Ay_n')}{\det \mu_{\abs{\det A}\, \rho(A\cdot\, \Cdot)} (y_n')}}^{\frac{2}{d+4}}
\left(\mu_{\abs{\det A}\, \rho(A\cdot\, \Cdot)}^\intercal (y_n')\, 
\mu_{\abs{\det A}\, \rho(A\cdot\, \Cdot)} (y_n')\right)^{-1}
\\
=\ &
\abs{\frac{\kappa\, \abs{\det A}\, \rho(y_n)}{\det A\, \det \mu_{\rho} (y_n)}}^{\frac{2}{d+4}}
\left(
A^\intercal \mu_{\rho}^\intercal (y_n)
\mu_{\rho} (y_n)\, A
\right)^{-1}
=
A^{-1}\phi_1\phi_1^\intercal A^{-\intercal},
\\[0.3cm]
\Phi(\rho^{(t)},y+a_k^{(t)})
=\ &
\abs{\frac{\kappa\, \rho^{(t)}(y+a_k^{(t)})}{\det \mu_{\rho^{(t)}}
(y+a_k^{(t)})}}^{-1/(d+4)} \mu_{\rho^{(t)}}^{-1} (y+a_k^{(t)}),
\xrightarrow{t\to\infty}
\abs{\frac{\kappa\, \rho_k(y)}{\det \mu_{\rho_k} (y)}}^{-1/(d+4)}
\mu_{\rho_k}^{-1} (y)
\\
=\ &
\Phi(\rho_k,y).
\end{align*}

\end{proof}

It is remarkable that Parzen's equation \eqref{equ:ParzenBandwidth} from 1962
also fulfills all the Invariance Axioms \ref{cond:ScalingConditions} in the
univariate case.
However, its generalization to the multivariate case encounters some fundamental
problems, which we discuss in the following section.

\section{Parzen's Law \eqref{equ:ParzenBandwidth} in the Multivariate Case}
\label{section:Parzen}

A standard criterion for optimizing the bandwidth $h$ is to minimize the MISE.
However, in the VKDE setting, minimizing the MSE and MISE is asymptotically
equivalent: As the number $N$ of samples grows (and the bandwidths
decrease), the contribution of far away kernels can be neglected and optimizing the
bandwidths locally by minimizing the MSE automatically results in minimizing the
MISE as well (asymptotically).
\begin{quote}
``Recall that the MISE accumulates pointwise errors. Thus accumulating the minimal
pointwise errors [...] gives the asymptotic lower bound to the
adaptive AMISE.'' \cite{scott2015multivariate}
\end{quote}
This observation strongly simplifies the choice of the asymptotically optimal
bandwidths, which now can be optimized separately as in Parzen's law \eqref{equ:ParzenBandwidth}, instead of simultaneously. Let us try to generalize this law to arbitrary dimension $d$.
In this section, we will follow the discussion in \cite[Section
6.6.3]{scott2015multivariate} and \cite[Section 5]{terrell1992variable}, where
several of the results presented here have already been derived.

In this section, we assume that $\rho\in C^4(\R^d)$. As always, the kernel
$K\in C^2(\R^d)$ is assumed to be to be a radially symmetric probability density function as in \eqref{equ:radialKernel} and in addition we assume the following conditions on the second, third and fourth moments of $K$ (here, $i,j,k,l\in\{1,\dots,d\}$ and $\delta$ denotes the Kronecker delta function),
\begin{equation}
\label{equ:MomentConditions}
\small
\int x_i x_j K(x)\, \mathrm dx
=
\delta_{ij},
\quad
\int x_i x_j x_k K(x)\, \mathrm dx
=
0,
\quad
\int x_i x_j x_k x_l K(x)\, \mathrm dx
=
\delta_{ij}\delta_{kl} + \delta_{ik}\delta_{jl} + \delta_{il}\delta_{jk},
\end{equation}
which are fulfilled by e.g. the standard Gaussian density function. We also assume that
\begin{equation}
\label{equ:formulaRK}
R(K):=\int K(t)^2\, \mathrm dt < \infty,
\end{equation}
e.g. for a standard Gaussian we have $R(K) = (4\pi)^{-d/2}$, and denote for
$h\in\gldr$
\begin{equation}
\Delta_4(\rho,h)
:=
\sum_{i,j,k,l=1}^{d}
\frac{\partial^4\rho}{\partial_{x_i}\partial_{x_j}\partial_{x_k}\partial_{x_l}}
\left(
\big[h^2\big]_{ij}\big[h^2\big]_{kl}
+\big[h^2\big]_{ik}\big[h^2\big]_{jl}
+\big[h^2\big]_{il}\big[h^2\big]_{jk}
\right),
\end{equation}
where $[A]_{ij}$ denotes the entry in the $i$-th row and $j$-th column of a matrix $A$.
\begin{proposition}
Let the kernel $K\in C^2(\R^d)$ fulfill \eqref{equ:radialKernel},
\eqref{equ:MomentConditions} and \eqref{equ:formulaRK}, $\rho\in C^4(\R^d)$ be a
probability density function and $H_x = D^2\rho(x)$ denote the Hessian of $\rho$
at $x\in\R^d$. Then, asymptotically for large $N$ and small $\|h\|$, the bias,
variance and MSE of
\[
\hat{\rho} (x) 
:=
\frac{1}{N}\sum_{n=1}^{N} K_h\left(x-y_n\right)
:=
\frac{|\det h^{-1}|}{N}\sum_{n=1}^{N} K\left(h^{-1}(x-y_n)\right)
\]
in dependence of $h\in\gldr$ are given by
\begin{align*}
\B_h[\hat \rho(x)]
&=
\tfrac{1}{2}\tr(h^\intercal H_x h)
+
\tfrac{1}{24}\Delta_4(\rho,h)(x),
\\[0.2cm]
\V_h[\hat \rho(x)]
&=
\frac{\rho(x)\, R(K)}{N\, \abs{\det h}},
\\
\MSE_h[\hat\rho(x)]
&=
\frac{\rho(x)\, R(K)}{N\, \abs{\det h}}
+
\left(
\tfrac{1}{2}\tr\left({h^\intercal H_x h}\right)
+
\tfrac{1}{24}\Delta_4(\rho,h)(x)
\right)^2.
\end{align*}
\end{proposition}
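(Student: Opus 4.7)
The plan is a standard computation: change variables, Taylor-expand $\rho$ at $x$ to fourth order, and eliminate the odd-order terms using the symmetry and moment conditions on $K$.

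First, I would rewrite the expected value via the substitution $u = h^{-1}(x-y)$, yielding
\[
\E[\hat\rho(x)] = \int K_h(x-y)\,\rho(y)\,\mathrm dy = \int K(u)\,\rho(x-hu)\,\mathrm du,
\]
so that the Jacobian $\abs{\det h}$ cancels against the prefactor in $K_h$. Next I would Taylor-expand $\rho(x-hu)$ at $x$ through fourth order, writing the $k$th-order term as $\frac{(-1)^k}{k!}\sum_{i_1,\dots,i_k}\partial_{i_1\cdots i_k}\rho(x)\,(hu)_{i_1}\cdots(hu)_{i_k}$, and expanding each factor $(hu)_i = \sum_a h_{ia}u_a$. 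Integrating against $K(u)$, the constant term reproduces $\rho(x)$, the first-order term vanishes because radial symmetry of $K$ gives $\int u_aK(u)\mathrm du=0$, and the third-order term vanishes by the third-moment hypothesis in \eqref{equ:MomentConditions}.

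For the second-order term, the condition $\int u_au_bK(u)\,\mathrm du = \delta_{ab}$ gives $\int (hu)_i(hu)_j K(u)\,\mathrm du = \sum_a h_{ia}h_{ja} = [hh^\intercal]_{ij}$, so the contribution equals $\tfrac{1}{2}\sum_{i,j}[H_x]_{ij}[hh^\intercal]_{ij} = \tfrac{1}{2}\tr(H_x hh^\intercal) = \tfrac{1}{2}\tr(h^\intercal H_x h)$ by cyclicity. For the fourth-order term, the fourth-moment condition produces three Kronecker products, which after summing over the inner indices become $[hh^\intercal]_{ij}[hh^\intercal]_{kl}+[hh^\intercal]_{ik}[hh^\intercal]_{jl}+[hh^\intercal]_{il}[hh^\intercal]_{jk}$; identifying $[h^2]$ with $[hh^\intercal]$ in the notation of the proposition, this is precisely $\tfrac{1}{24}\Delta_4(\rho,h)(x)$. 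Subtracting $\rho(x)$ gives the bias formula.

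For the variance, I would start from $\V_h[\hat\rho(x)] = \tfrac{1}{N}\V[K_h(x-Y_1)] = \tfrac{1}{N}\bigl(\E[K_h(x-Y_1)^2] - \E[K_h(x-Y_1)]^2\bigr)$. The same change of variables applied to the second moment yields
\[
\E[K_h(x-Y_1)^2] = \frac{1}{\abs{\det h}}\int K(u)^2\rho(x-hu)\,\mathrm du = \frac{\rho(x)\,R(K)}{\abs{\det h}} + O(1),
\]
where the $O(1)$ remainder comes from Taylor expansion of $\rho(x-hu)$ and is negligible relative to the leading $1/\abs{\det h}$ term as $\norm{h}\to 0$. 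The squared expectation $\E[K_h(x-Y_1)]^2$ tends to $\rho(x)^2$ and is therefore absorbed into lower-order terms. Dividing by $N$ produces the stated variance to leading order. Finally, the identity $\MSE = \B^2 + \V$ yields the last formula.

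The only real obstacle is bookkeeping in the fourth-order step: unwinding the nested sums $(hu)_{i_1}\cdots(hu)_{i_4}$, applying the moment identity, and matching the three arising index-pairings with the three terms inside $\Delta_4$. Everything else is a direct consequence of radial symmetry and the moment hypotheses \eqref{equ:MomentConditions}–\eqref{equ:formulaRK}.
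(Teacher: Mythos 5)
Your proposal is correct and follows essentially the same route as the paper: the substitution $t=h^{-1}(x-y)$, a fourth-order Taylor expansion of $\rho(x-ht)$ killing the odd terms by symmetry and producing $\tfrac12\tr(h^\intercal H_x h)+\tfrac{1}{24}\Delta_4(\rho,h)(x)$ via the moment conditions, and the same change of variables in $\E[K_h(x-Y_1)^2]$ for the variance, with the squared mean absorbed into higher-order terms. The paper's proof is just a terser version of the same computation, so no substantive difference to report.
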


\begin{remark}
We expanded the bias up to order $4$, since $\tr(h^\intercal H_x h)$ will turn out to be zero in certain cases. In all other cases the term $\tfrac{1}{24}\Delta_4(\rho,h)(x)$ will be neglected.
\end{remark}

\begin{proof}
Since the sample points $y_1,\dots,y_N\stackrel{\rm iid}{\sim}\rho$ are independent, a Taylor expansion yields (we abbreviate ``higher order terms'' by $\hot$)
\begin{align*}
\B_h[\hat \rho(x)]
&=
\abs{\det h^{-1}}\int K(h^{-1}(x-y))\, \rho(y)\, \mathrm dy - \rho(x)
\\
&=
\int K(t)\,  \left[\rho(x-h t) - \rho(x)\right]\, \mathrm dt
\\
&=
\int K(t)\, \Big[
\sum_{\ell=1}^{4}
\frac{(-1)^\ell}{\ell !}
\left(D^{(\ell)}\rho(x)\right)(\underbrace{ht,\dots,ht}_{\ell\text{ times}})
+ \hot
\Big]\, \mathrm dt
\\
&=
\tfrac{1}{2}\tr(h^\intercal H_x h)
+
\tfrac{1}{24}\Delta_4(\rho,h)(x)
\ + \hot\, ,
\\[0.3cm]
\V_h[\hat \rho(x)]
&=
\frac{1}{N}\left[
\abs{\det h^{-1}}^2\int K(h^{-1}(x-y))^2\, \rho(y)\, \mathrm dy
- (\rho(x) + \B_h[\hat \rho(x)])^2
\right]
\\
&=
\frac{\abs{\det h^{-1}}}{N}
\int K(t)^2\, \rho(x-ht)\, \mathrm dt
- \frac{1}{N}\left(\rho(x) + o(\|h\|^{2})\right)^2
\\
&=
\frac{\rho(x)\, R(K)}{N\, \abs{\det h}}\ +\ \hot\, ,
\end{align*}
which also proves the formula for $\MSE_h[\hat\rho(x)]$.
\end{proof}

In order to minimize $\MSE_h[\hat\rho(x)]$ we will discuss three scenarios
concerning the eigenvalues of the Hessian $H_x = D^2\rho(x)$. This is a fundamental difference compared
to the univariate case, in which (basically) only the first case has to be
covered.

\begin{enumerate}[\textbf{{Case} 1:}]
\item $H_x$ is either positive definite or negative definite.
Then, ignoring the higher order terms as well as the term $\tfrac{1}{24}\Delta_4(\rho,h)(x)$ and taking the derivative of $\MSE_h[\hat\rho(x)]$ with respect to $h$, we obtain the following condition for its minimizer $h_\opt$:
\[
\frac{\rho(x)\, R(K)}{N}\, \Id = \det h_\opt\, \tr(h_\opt^\intercal H_x h_\opt)\, h_\opt^\intercal H_x h_\opt,
\]
which is solved by
\[
h_\opt = \left(\frac{\rho(x)\, R(K)\, \abs{\det H_x}^{1/2}}{d\, N}\right)^{1/(d+4)} (\pm\, H_x)^{-1/2}.
\]
\item[\textbf{{Case} 2:}] $H_x$ has both positive and negative eigenvalues.
We then rewrite $h = \lambda B$, $\lambda>0$ and $B\in\R^{d\times d}$ with $\det B =1$. Now, $B$ does not influence the variance $\V_h[\hat \rho(x)]$ and can be chosen to eliminate the leading term of the bias: $\tr(h^\intercal H_x h) = 0$. This can be realized by the following (non-unique) choice.
\begin{compactitem}
\item Diagonalize $H_x$ by an orthogonal matrix $U\in O(d)$:
\begin{equation*}
H_x = 
U
\, \diag (\delta_1,\dots,\delta_k,-\delta_{k+1},\dots,-\delta_d)\, 
U^\intercal,
\qquad
\delta_j\ge 0,\quad j=1,\dots,d
\end{equation*}
\item
Choose $B = (\det \tilde B)^{-1/d} \tilde B$, where
\begin{equation*}
\tilde B
= 
U
\, \diag (\delta_1^{-1/2},\dots,\delta_k^{-1/2},\delta^{-1/2},\dots,\delta^{-1/2})\, 
U^\intercal,
\qquad
\delta := \frac{\sum_{j=k+1}^{d}\delta_j}{k}
\end{equation*}
\item Then $\det B = 1$ by construction and
\begin{equation*}
(\det \tilde B)^{2/d}\, \tr (B^\intercal H_x B)
=
\tr (\tilde B^\intercal H_x \tilde B)
=
\sum_{j=1}^{k} 1\ -\ \sum_{j=k+1}^{d}\delta^{-1}\delta_j
=
k-k
=
0.
\end{equation*}
\end{compactitem}
$B$ being chosen, $\lambda >0$ can now be chosen to (asymptotically) minimize
\begin{equation*}
\MSE_\lambda[\hat\rho(x)]
\approx
\frac{\rho(x)\, R(K)}{N\, \lambda^d}
+
\left(
\frac{\lambda ^4}{24}\Delta_4(\rho,B)(x)
\right)^2,
\end{equation*}
resulting in (provided $\Delta_4(\rho,B)(x)\neq 0$)
\begin{equation}
\label{equ:optimalLambda}
\lambda_\opt = \left(\frac{24^2\, d\, \rho(x)\, R(K)}{N\, \Delta_4(\rho,B)(x)^2}\right)^{1/(d+8)}.
\end{equation}
\item[\textbf{{Case} 3:}] $H_x$ is positive or negative semidefinite with some eigenvalues equal to zero. The minimization problem can then be reduced to a lower dimension, see \cite[Section 6.6.3]{scott2015multivariate}. This case is degenerate and therefore of little practical relevance.
\end{enumerate}

\begin{remark}
\label{remark:arguableChoiceB}
The choice of $B$ in Case 2 is not the only way to achieve $\tr(h^\intercal H_x h) = 0$ and is based entirely on intuitive reasoning on how to rotate the kernel in space. However, as long as $\tr(h^\intercal H_x h) = 0$ is guaranteed, this choice is irrelevant for the asymptotic analysis since
``regions where the density is saddle-shaped asymptotically contribute nothing to the AAMISE [asymptotic adaptive MISE] compared to regions where the density is definite'' \cite{scott2015multivariate}.
\end{remark}

We will refer to this construction as Parzen's VKDE.
While this approach is provably optimal \emph{asymptotically}, it shows serious
problems for finite sample sizes, as discussed in the next section.

\section{Numerical Experiments}
\label{section:NumericalExperiments}

In order to compare the different kernel density estimates, we will present an
artificial example of a highly curved density (a strongly deformed Gaussian),
which is particularly hard to approximate, as well as a real data example in
Section \ref{section:Earthquake} (both in two dimensions).

\subsection{Artificial Example}
\label{section:ArtificialExample}
Consider the following ``complicated'' density $\rho$, visualized in Figure
\ref{fig:ComparisonVKDEs}(a),
\begin{equation}
\label{equ:bananaDensity}
\rho(x)
=
\frac{1}{2\pi \sigma}\, 
\exp\left(-\frac{1}{2} \left[\Big(\frac{x_1}{\sigma}\Big)^2 + \bigg(x_2 - \alpha\Big(\frac{x_1}{\sigma}\Big)^2\bigg)^2\right]  \right),
\qquad
\alpha = 4,\ \sigma = 5.
\end{equation}
We demonstrate the performance of several kernel density estimates when
making use of $\rho$ and its derivatives (Figure \ref{fig:ComparisonVKDEs}
(c)--(e)) and without this knowledge (Figure \ref{fig:ComparisonVKDEs} (b),(f)
and Figure \ref{fig:fpi2d}), see the discussion in Section \ref{section:FixedPointIteration} and the technical details in Appendix
\ref{section:technicalDetails}.

\begin{figure} 
        \centering
        \begin{subfigure}[b]{0.26\textwidth}
            \centering
			\includegraphics[width=\textwidth]{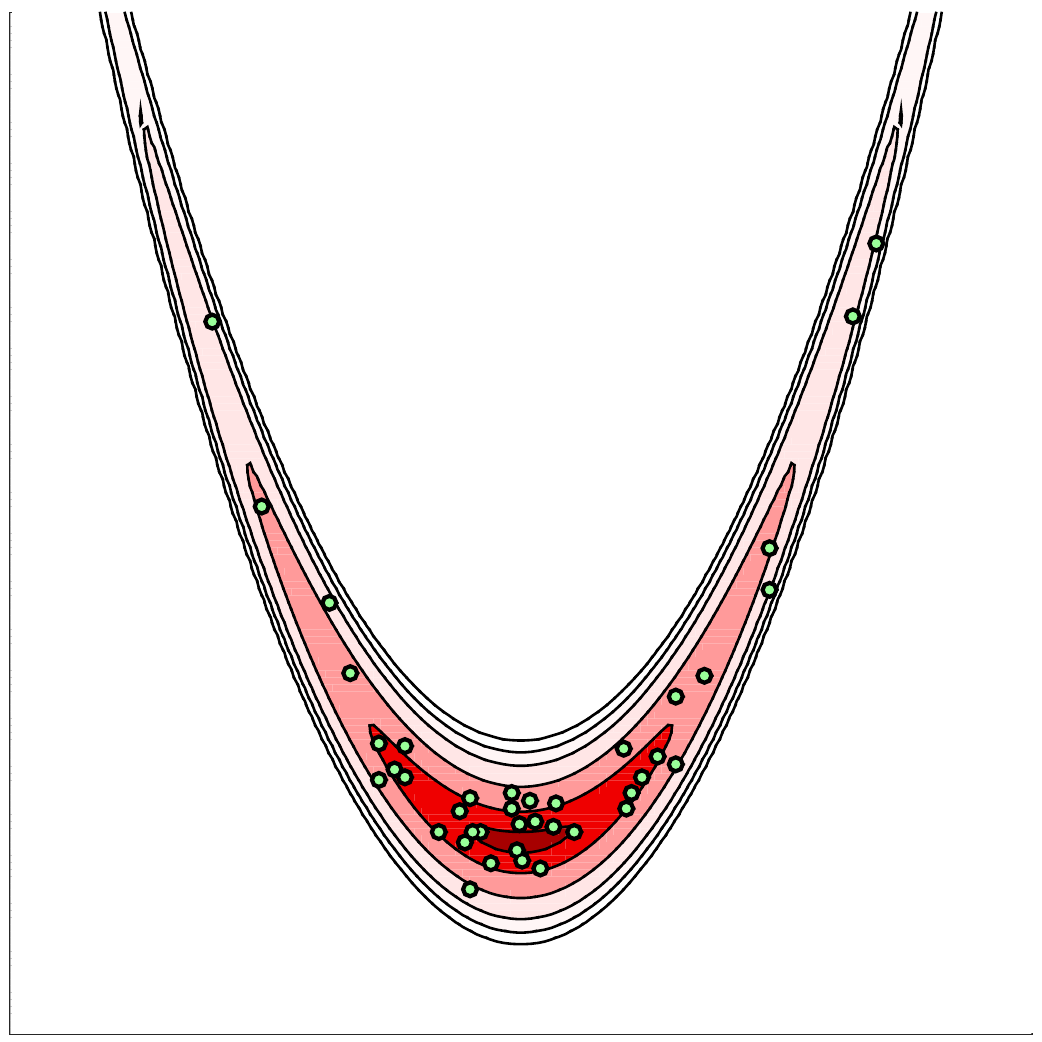}
            \caption{True density}
        \end{subfigure}
        \hspace{0.5cm}
        \begin{subfigure}[b]{0.26\textwidth}
            \centering
			\includegraphics[width=\textwidth]{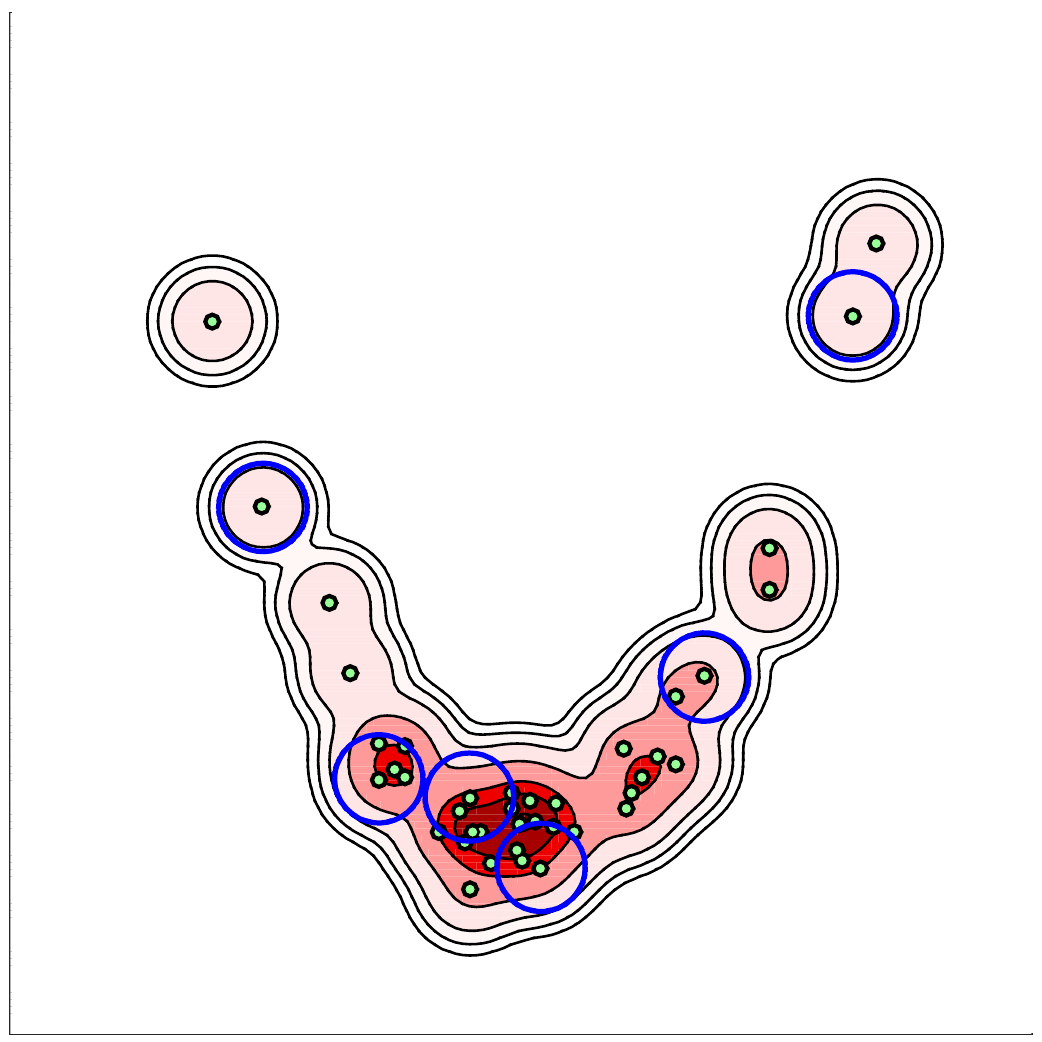}
			\caption{Standard KDE}
		\end{subfigure}
        \hspace{0.5cm}
        \begin{subfigure}[b]{0.26\textwidth}
            \centering
			\includegraphics[width=\textwidth]{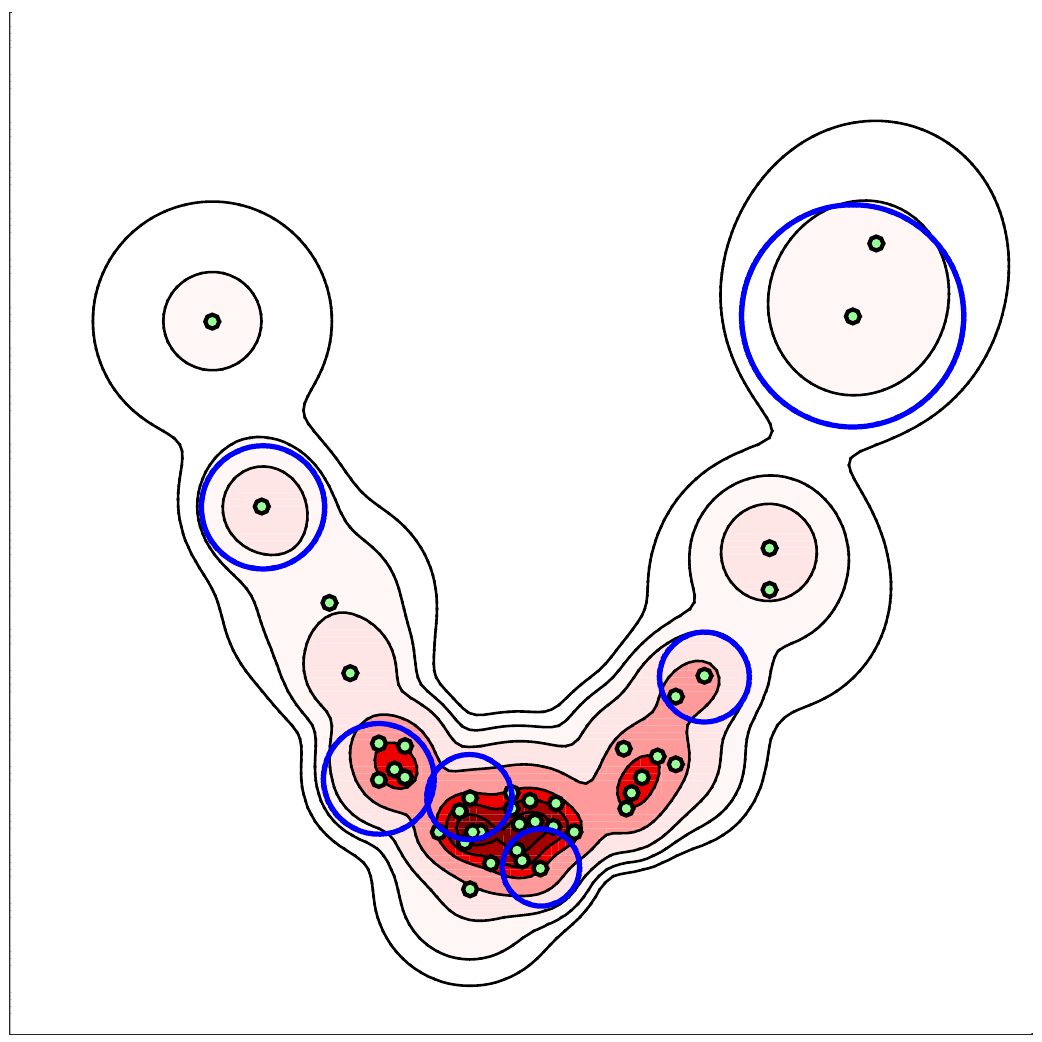}
			\caption{VKDE using \eqref{equ:hLaw}}
        \end{subfigure}
        \vfill
        \begin{subfigure}[b]{0.26\textwidth}
            \centering
			\includegraphics[width=\textwidth]{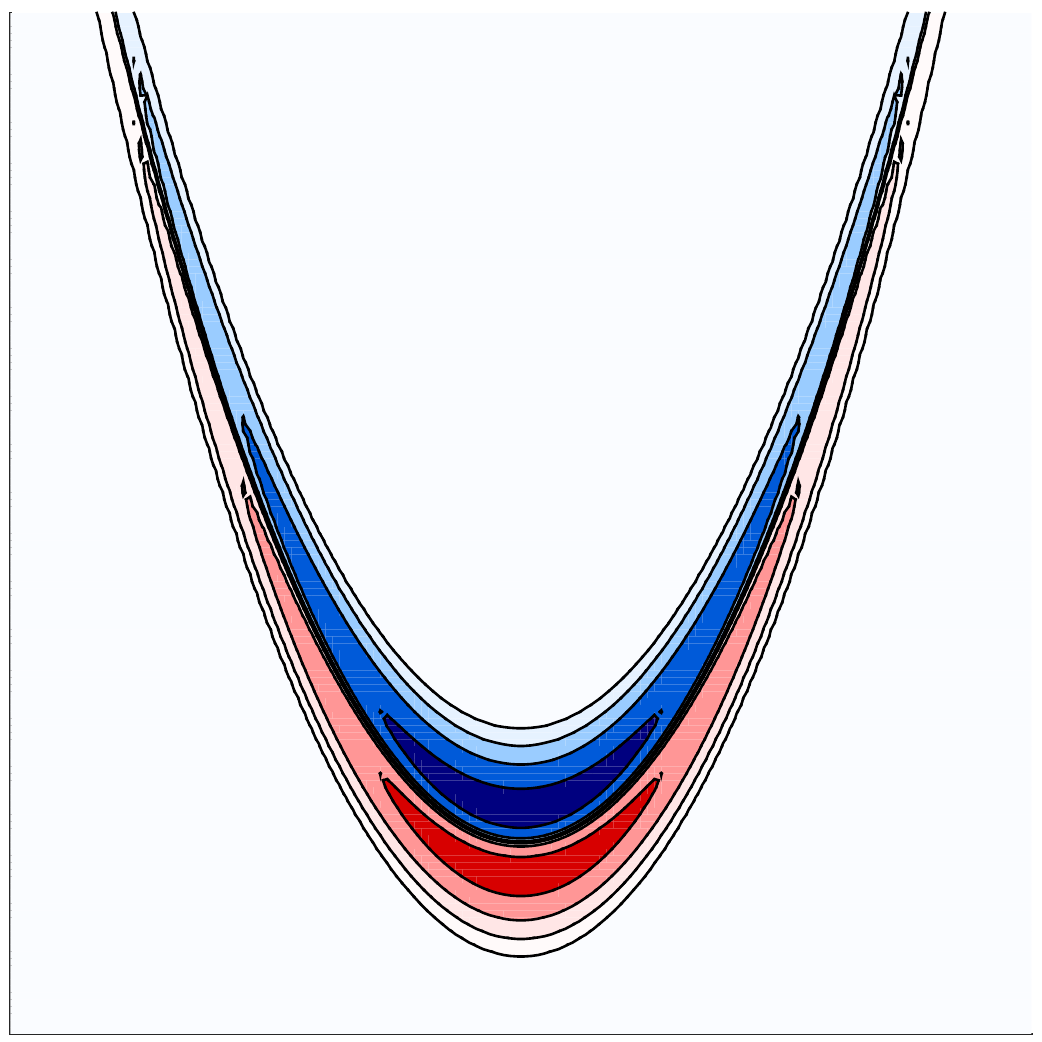}
        \end{subfigure}
        \hspace{0.5cm}
        \begin{subfigure}[b]{0.26\textwidth}
            \centering
			\includegraphics[width=\textwidth]{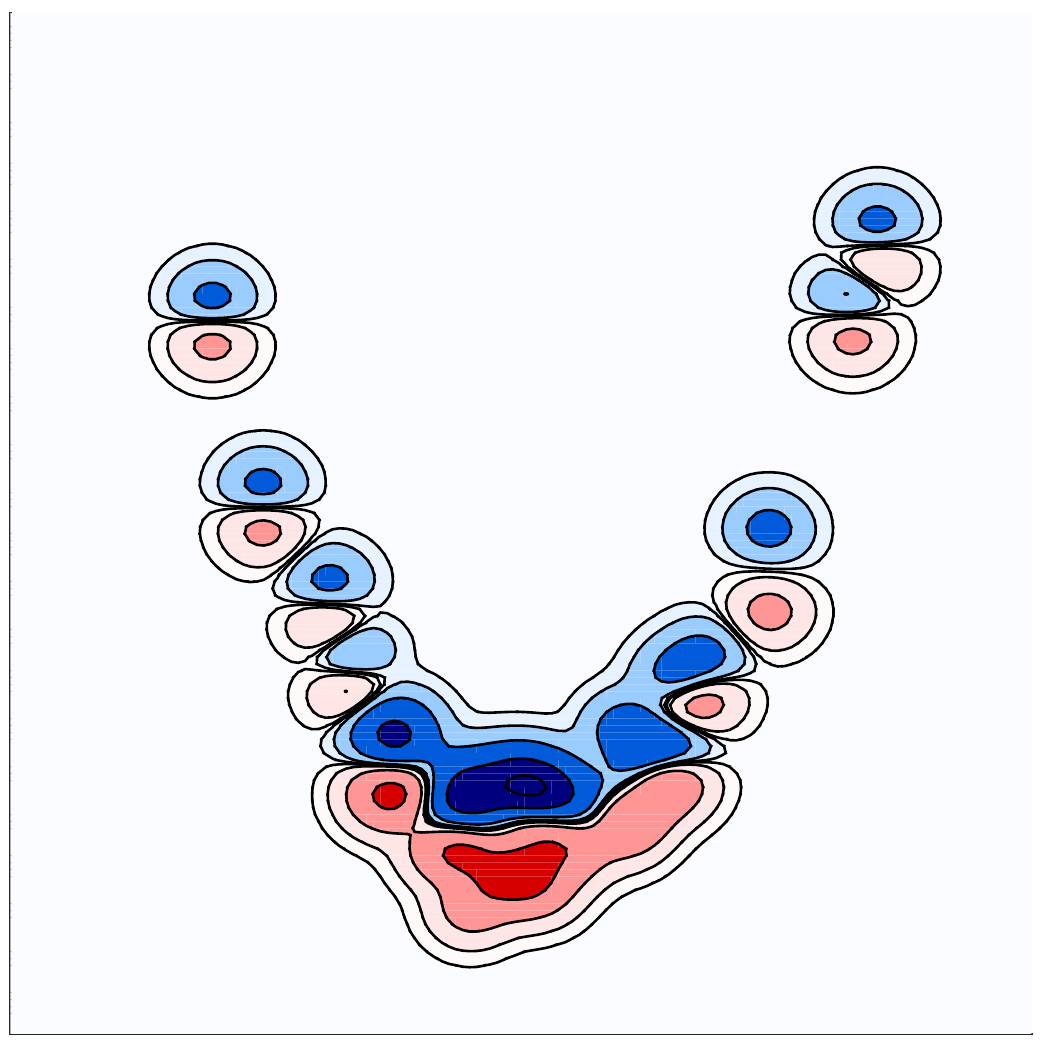}
		\end{subfigure}
        \hspace{0.5cm}
        \begin{subfigure}[b]{0.26\textwidth}
            \centering
			\includegraphics[width=\textwidth]{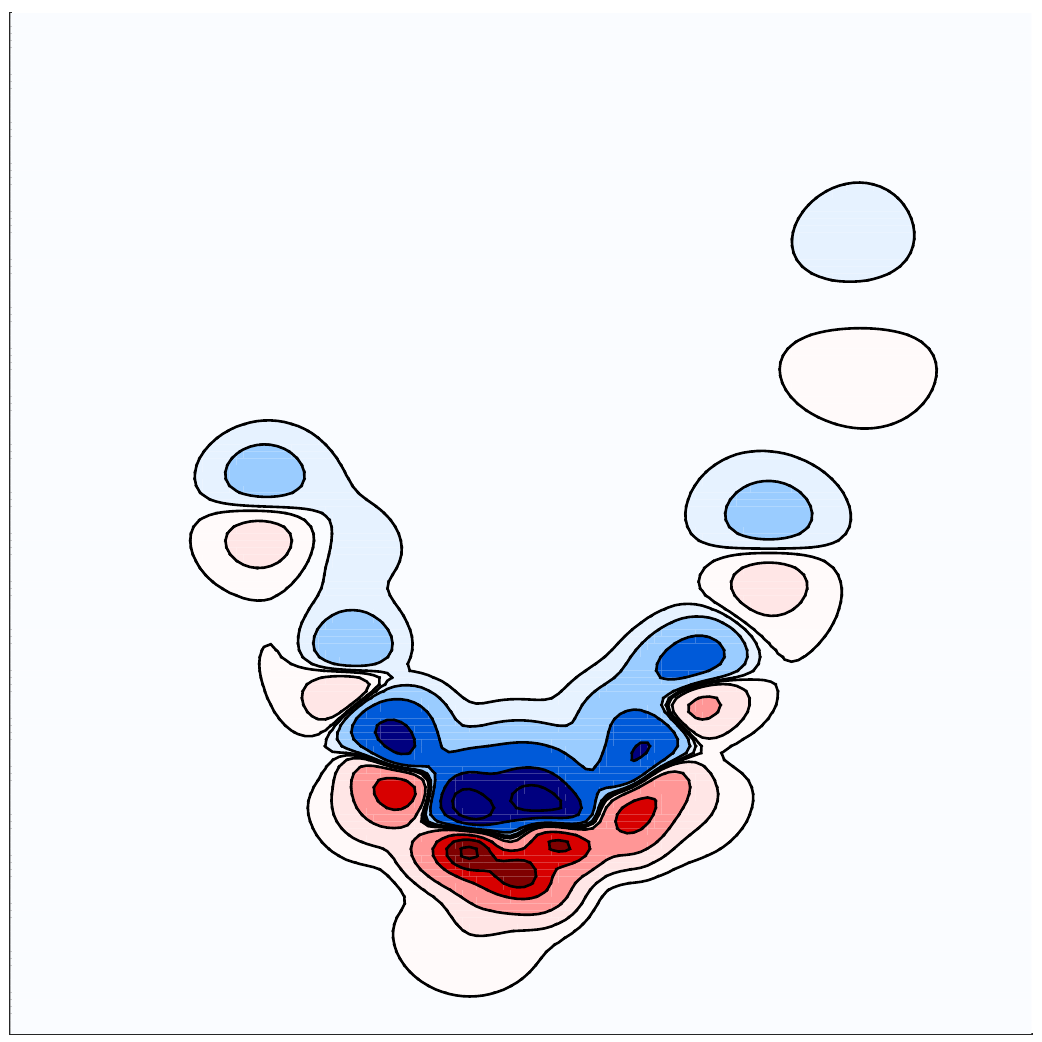}
        \end{subfigure}
        \vfill
		\vspace{0.2cm}               
        \hrule
        \vspace{0.04cm}               
        \hrule
        \vspace{0.2cm}               
        \begin{subfigure}[b]{0.26\textwidth}
            \centering
			\includegraphics[width=\textwidth]{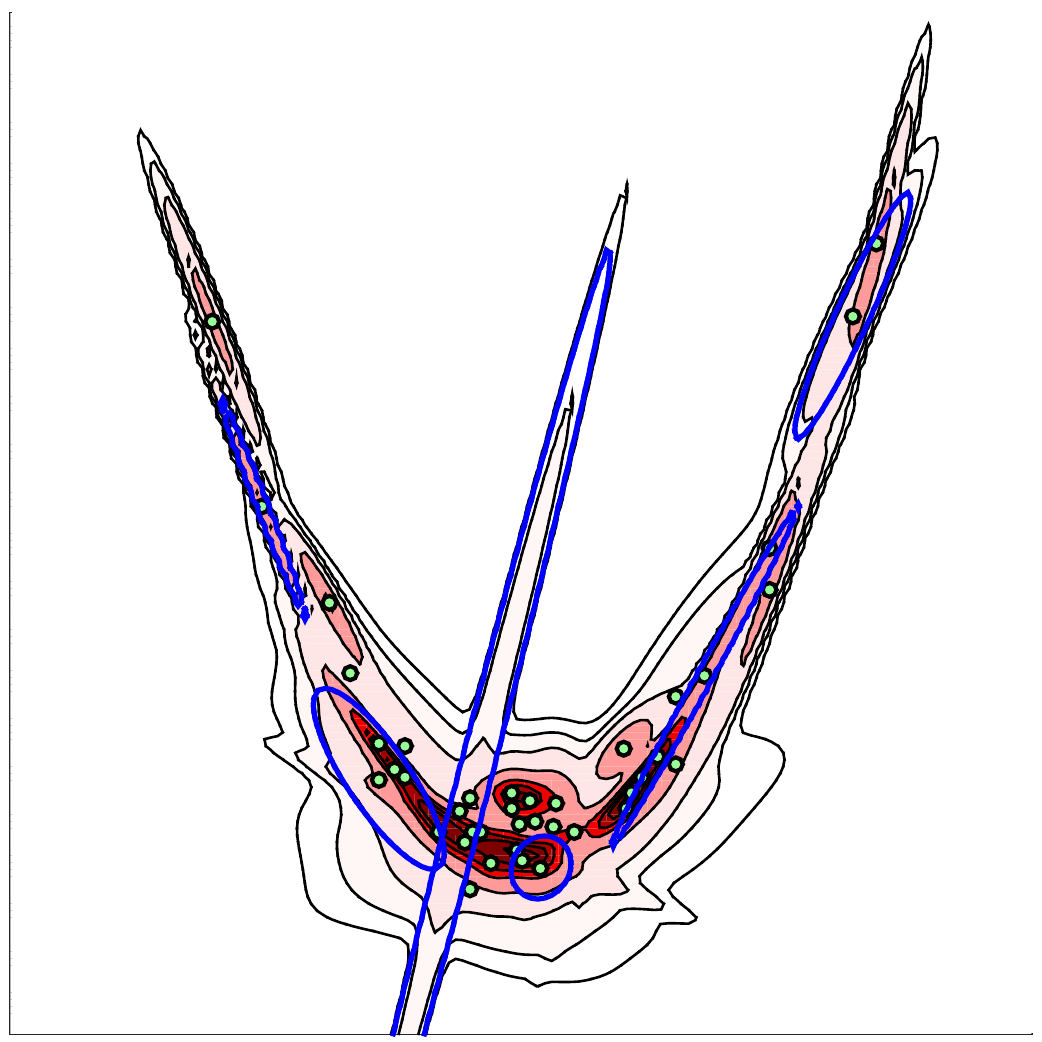}
			\caption{Parzen's VKDE}
		\end{subfigure}
        \hspace{0.5cm}
        \begin{subfigure}[b]{0.26\textwidth}
            \centering
			\includegraphics[width=\textwidth]{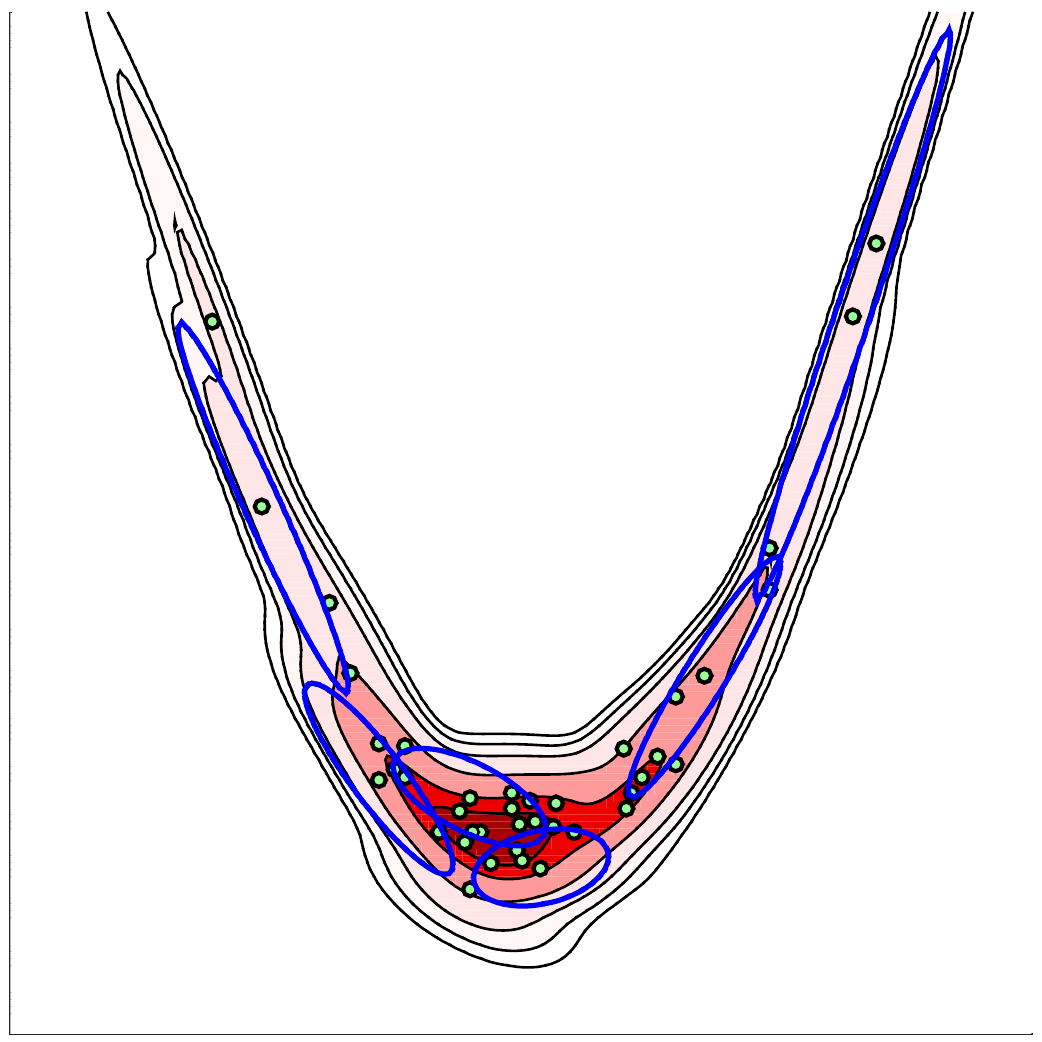}
			\caption{VKDE using \eqref{equ:FinalChoiceBandwidths}}
		\end{subfigure}
        \hspace{0.5cm}
        \begin{subfigure}[b]{0.26\textwidth}
            \centering
			\includegraphics[width=\textwidth]{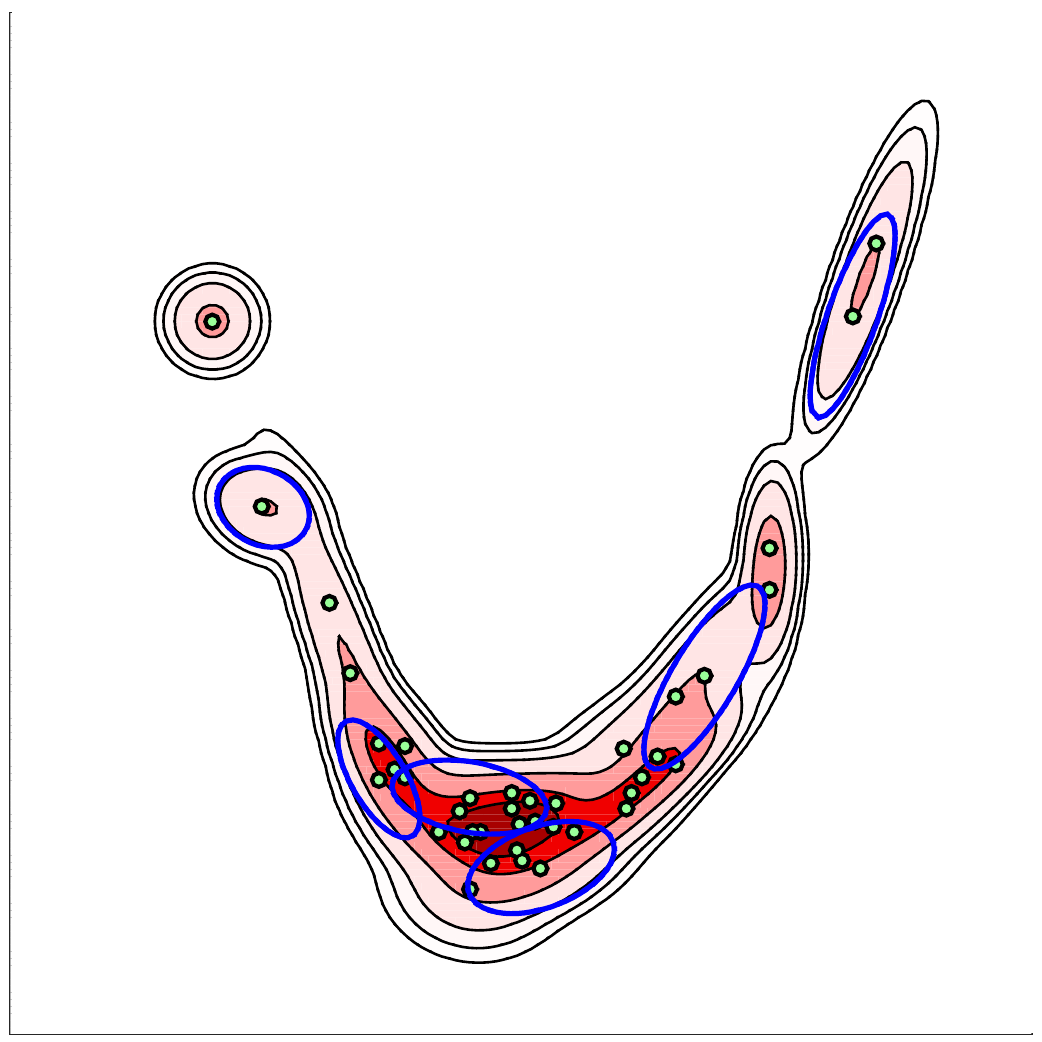}
			\caption{\eqref{equ:densityFixedPointIteration} using \eqref{equ:FinalChoiceBandwidths}}
		\end{subfigure}
        \vfill
        \begin{subfigure}[b]{0.26\textwidth}
            \centering
			\includegraphics[width=\textwidth]{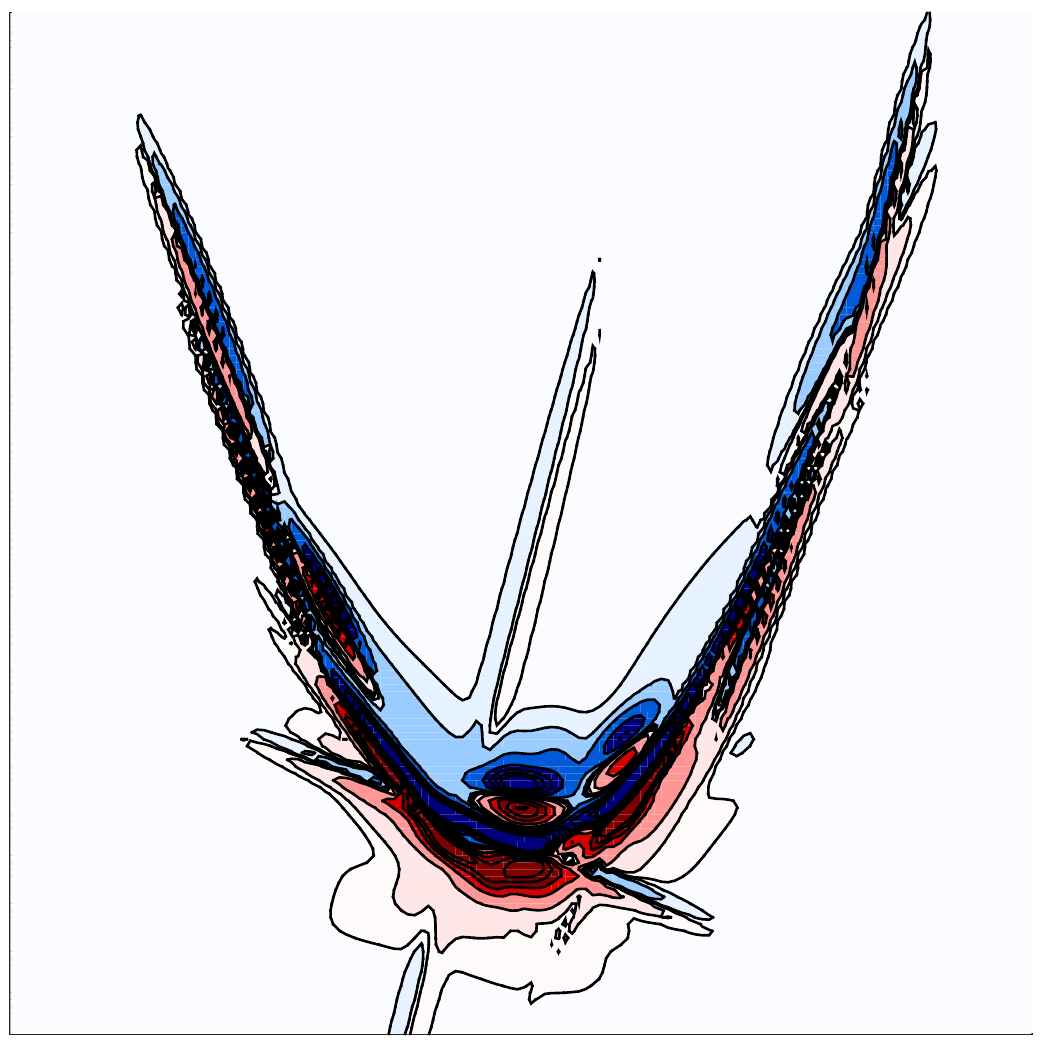}
		\end{subfigure}
        \hspace{0.5cm}
        \begin{subfigure}[b]{0.26\textwidth}
            \centering
			\includegraphics[width=\textwidth]{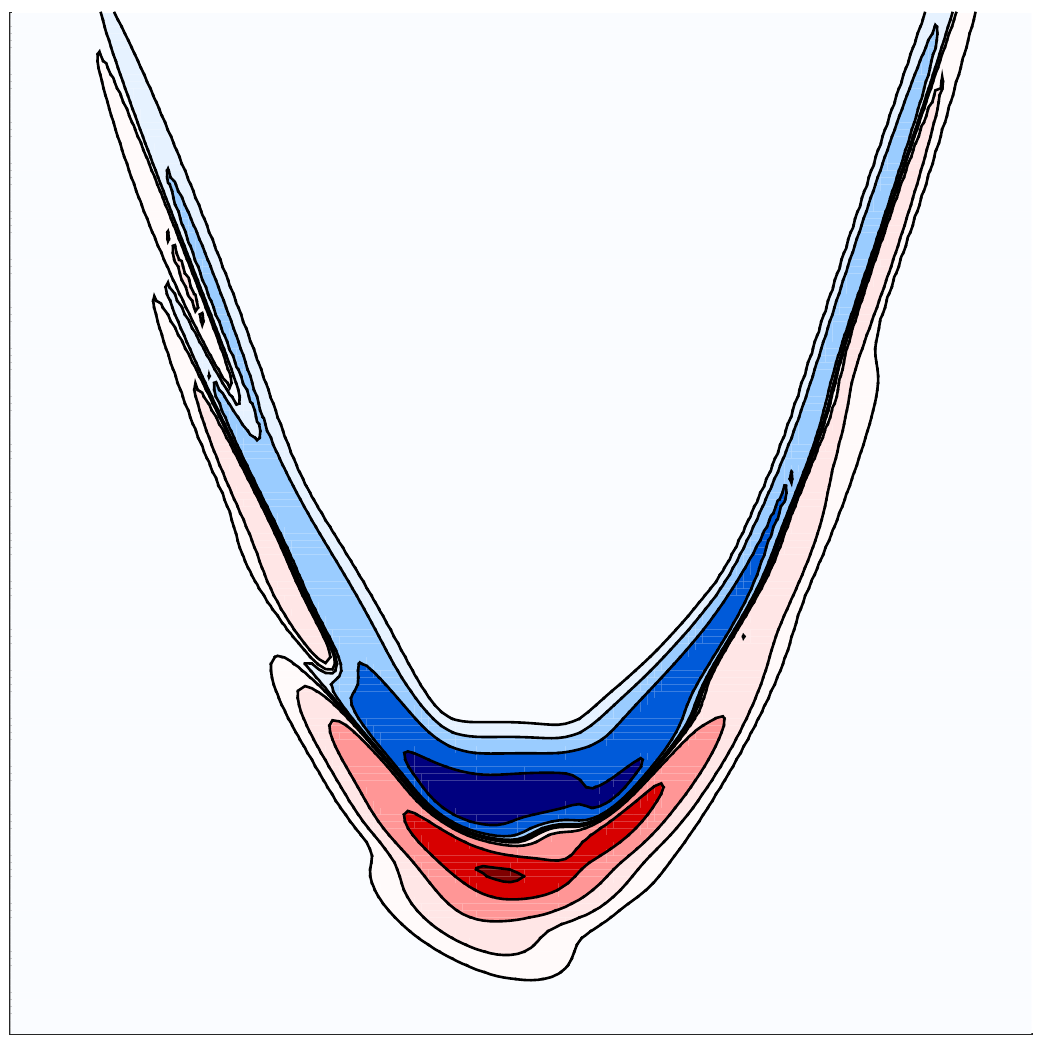}
		\end{subfigure}
        \hspace{0.5cm}
        \begin{subfigure}[b]{0.26\textwidth}
            \centering
			\includegraphics[width=\textwidth]{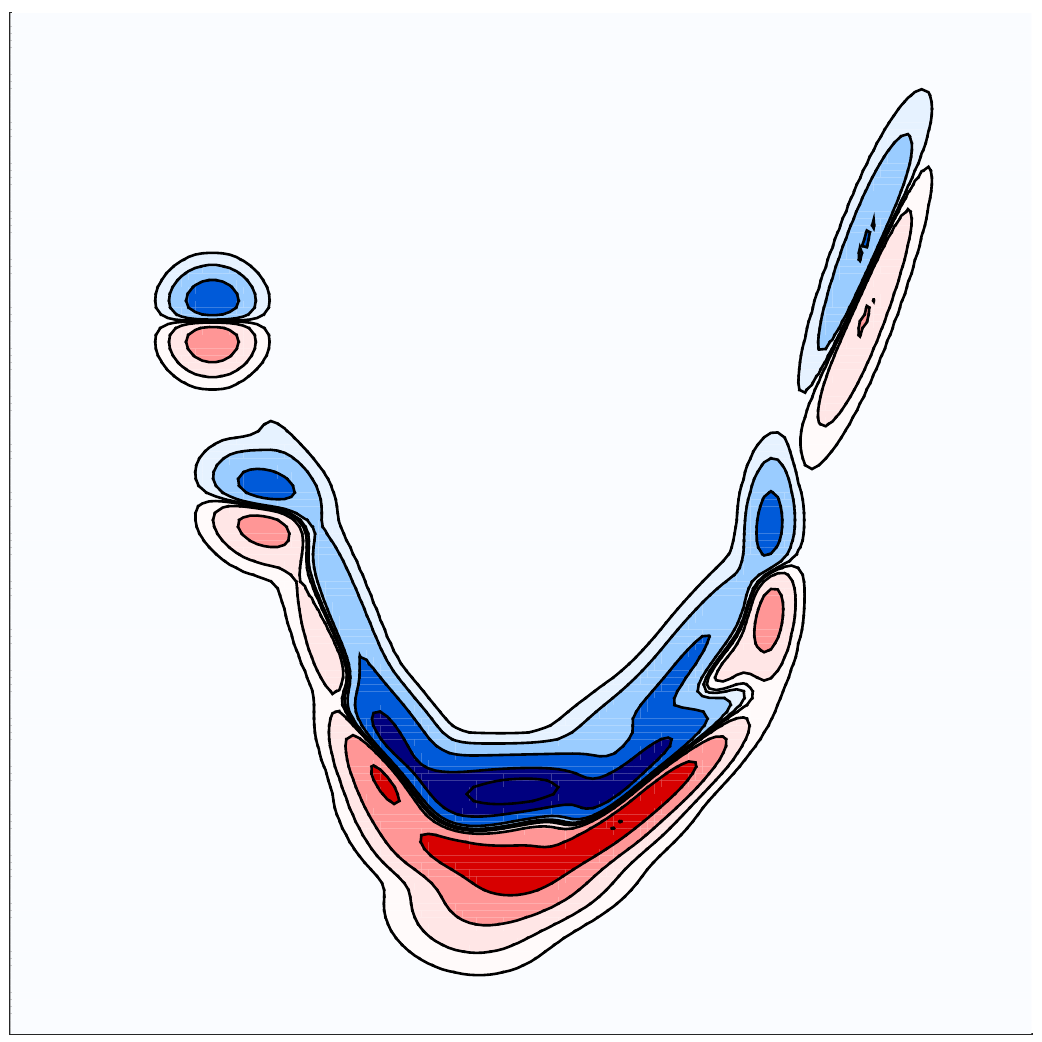}
		\end{subfigure}		
        \caption{Different kernel density estimates $\hat{\rho}$ of the density
        \eqref{equ:bananaDensity} for $N=40$ sample points are plotted together
        with six kernels (ellipses are 80\% contours of six kernels) and their
        derivatives $\partial_{x_2}\hat{\rho}$ below. The overall smoothing in
        (b), (c) and (e) was chosen to minimize the MISE, the one for (f) was chosen
        manually. The advantages of VKDE over the standard KDE is clearly visible. While (c), (d) and (e) were computed using the true density $\rho$ and its derivatives, (f) was computed without this knowledge by
        the fixed point iteration \eqref{equ:densityFixedPointIteration} (10
        iterations were performed).
		}
        \label{fig:ComparisonVKDEs}
\end{figure}

Comparing the performance of the different estimates, we observe
better performance of VKDE in the tails, which is particularly evident from the
derivative plots (the partial derivative in $x_2$-direction is plotted).
Surprisingly, the MISE of the standard KDE is smaller than the one of the VKDE
chosen by \eqref{equ:hLaw} (the sensitivity parameter $\beta = 1/2$ was used). The tails of the latter are just too flat, which
stems from the lack of flexibility in stretching and rotating the kernels.
This does not mean that the overall performance of standard KDE is better, as is
evident from the number of additional modes of the estimates and from the
derivatives, where \eqref{equ:hLaw} outperforms standard KDE.

Parzen's KDE, though provably optimal asymptotically, encounters serious
problems in the finite sample scenario. Though making use of the true density
and its derivatives, it fails to choose flatter kernels in regions of low
density and peaked kernels in regions of high density and the orientation of the
kernels is far from optimal.
In addition, there is at least one serious outlier, which has a disastrous
impact on the overall form of the estimate. Its origin is a positive definite
Hessian $H_x$ of $\rho$ with an eigenvalue close to zero -- the kernel is chosen
absurdly wide in the direction of the corresponding eigenvector.

The bandwidth selector \eqref{equ:FinalChoiceBandwidths} highly outperforms the other
methods. Not only is its MISE considerably lower, it is also the only method
which manages to reproduce the overall form of the density and does not
introduce many additional modes (this is particularly evident from the derivative plots).

Naturally, if the true density $\rho$ and its derivatives are not accessible and
one is forced to apply pilot estimates or fixed point iterations, the
performance of all VKDE estimates suffers. However,
\eqref{equ:FinalChoiceBandwidths} still outperforms the other methods,
especially when comparing the overall form of the approximation (note, that for
the fixed point iteration the proportionality constant $\kappa$ in
\eqref{equ:FinalChoiceBandwidths} was chosen manually and does not minimize the
MISE). Several steps of the fixed point iteration are plotted in Figure
\ref{fig:fpi2d}.

The MISEs of all methods are given in Table \ref{table:MISE}.

\begin{table}[H]
\centering
\begin{small}
\begin{tabular}{|C{2.1cm}||C{1.8cm}|C{1.8cm}|C{1.8cm}|C{1.8cm}|C{1.8cm}|}
\hline
\vspace{0.1cm} Method \vspace{0.1cm} & Standard KDE & VKDE using \eqref{equ:hLaw} & Parzen's VKDE & VKDE using \eqref{equ:FinalChoiceBandwidths} & \eqref{equ:densityFixedPointIteration} using \eqref{equ:FinalChoiceBandwidths}
\\
\hline
\hline
\vspace{0.1cm} MISE of $\hat \rho$ \vspace{0.1cm} & 0.57 & 0.61 & 0.65 &
0.38 & 0.49\\
\hline
\vspace{0.1cm} MISE of $\partial_y\hat\rho$ \vspace{0.1cm} & 0.61 & 0.59 &
1.34 & 0.40 & 0.55\\ 
\hline
\end{tabular}
\end{small}
\caption{MISE of each density estimate $\hat \rho$ and its derivative $\partial_y\hat\rho$.}
\label{table:MISE}
\end{table}

%
%
%

\begin{figure}[H]
        \centering
        \begin{subfigure}[b]{0.32\textwidth}
            \centering
			\includegraphics[width=\textwidth]{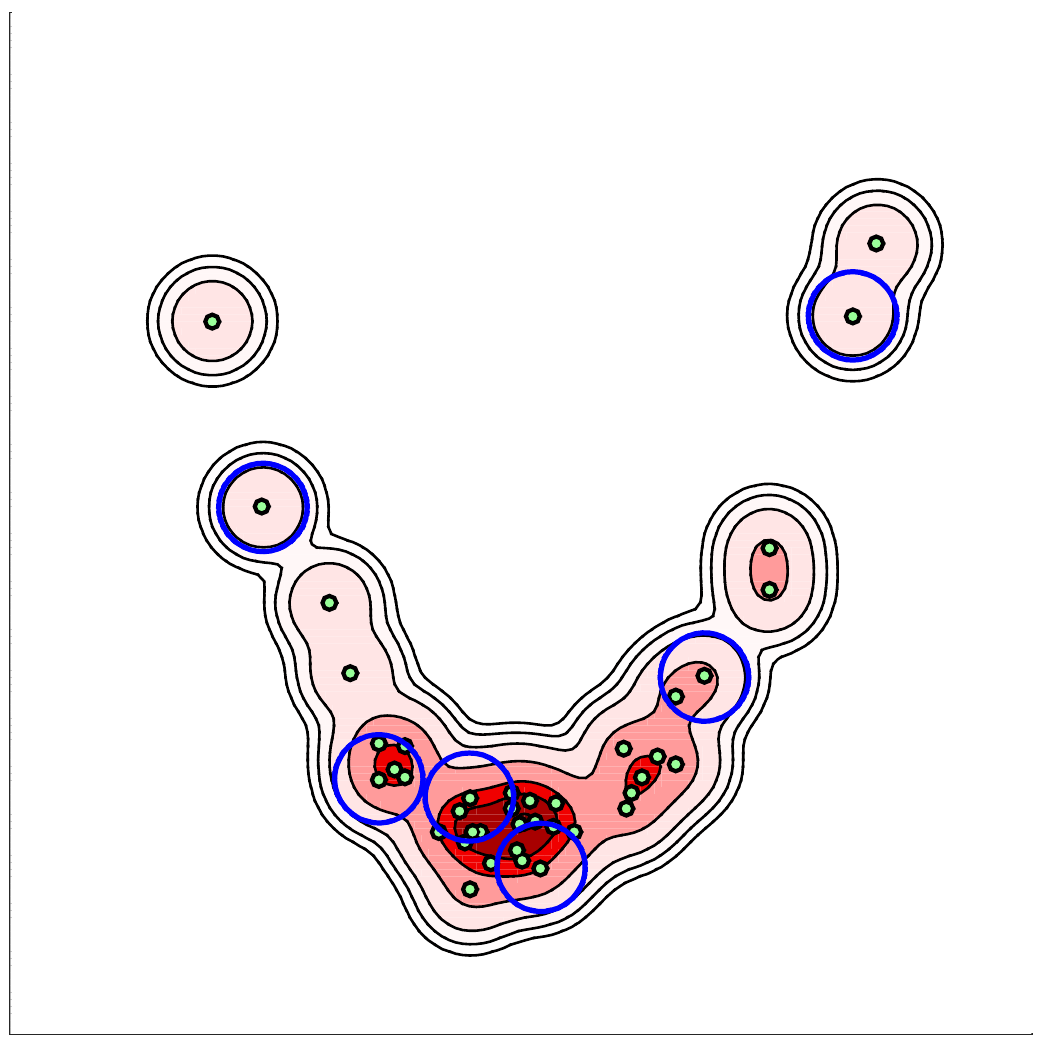}
            \caption{Standard KDE}
            \label{fig:trueDensity}			
        \end{subfigure}
        \hfill
        \begin{subfigure}[b]{0.32\textwidth}
            \centering
			\includegraphics[width=\textwidth]{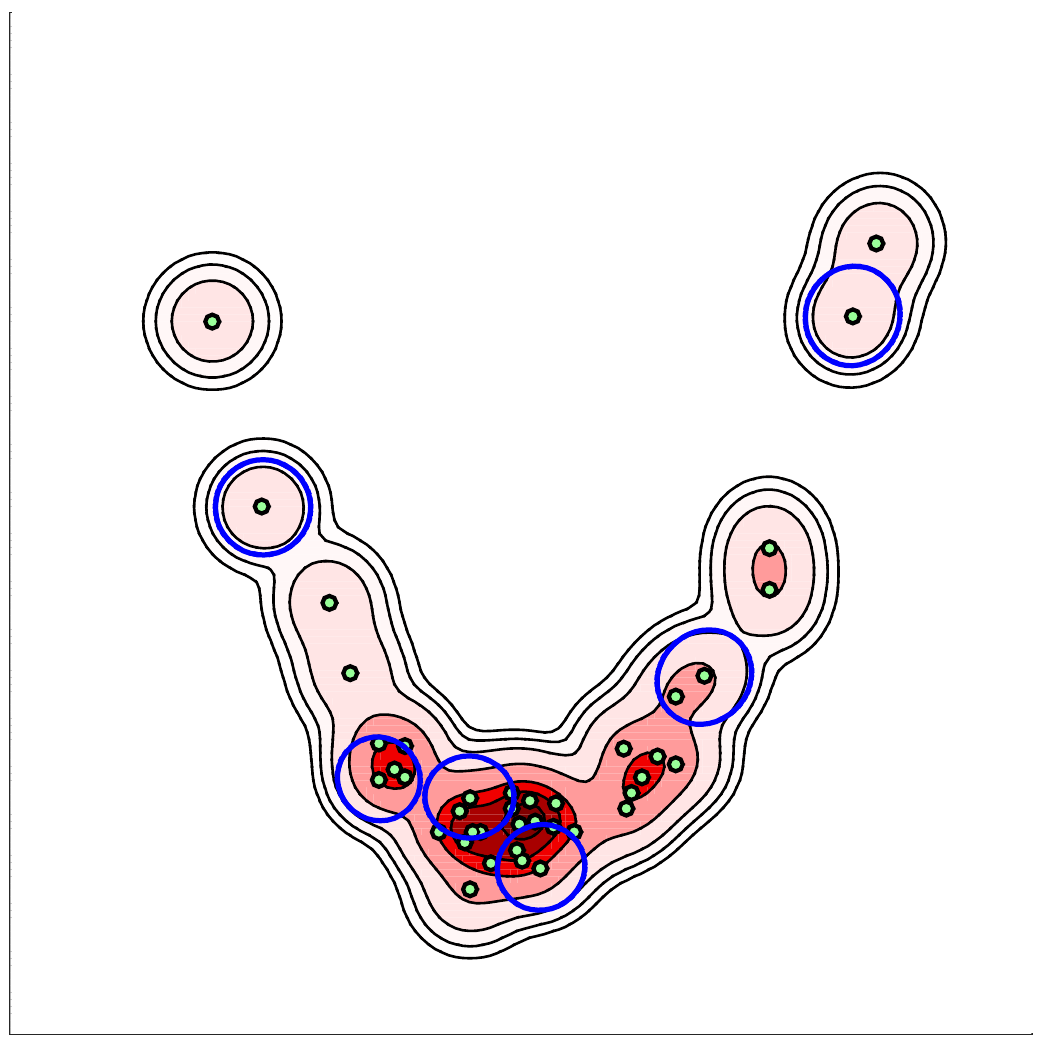}
            \caption{1 iteration step}			
		\end{subfigure}
        \hfill
        \begin{subfigure}[b]{0.32\textwidth}
            \centering
			\includegraphics[width=\textwidth]{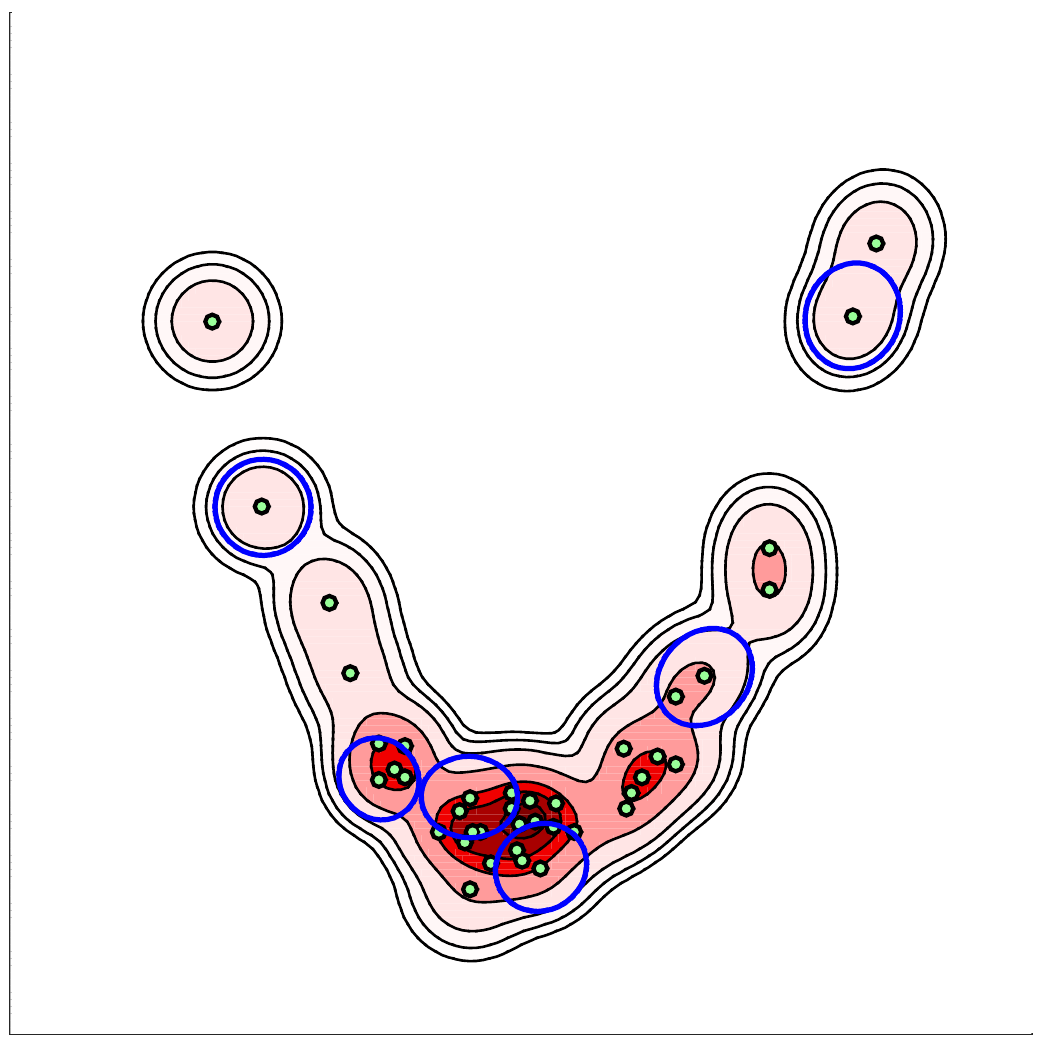}
            \caption{2 iteration steps}			
        \end{subfigure}
        \vfill
        \vspace{0.2cm}
        \begin{subfigure}[b]{0.32\textwidth}
            \centering
			\includegraphics[width=\textwidth]{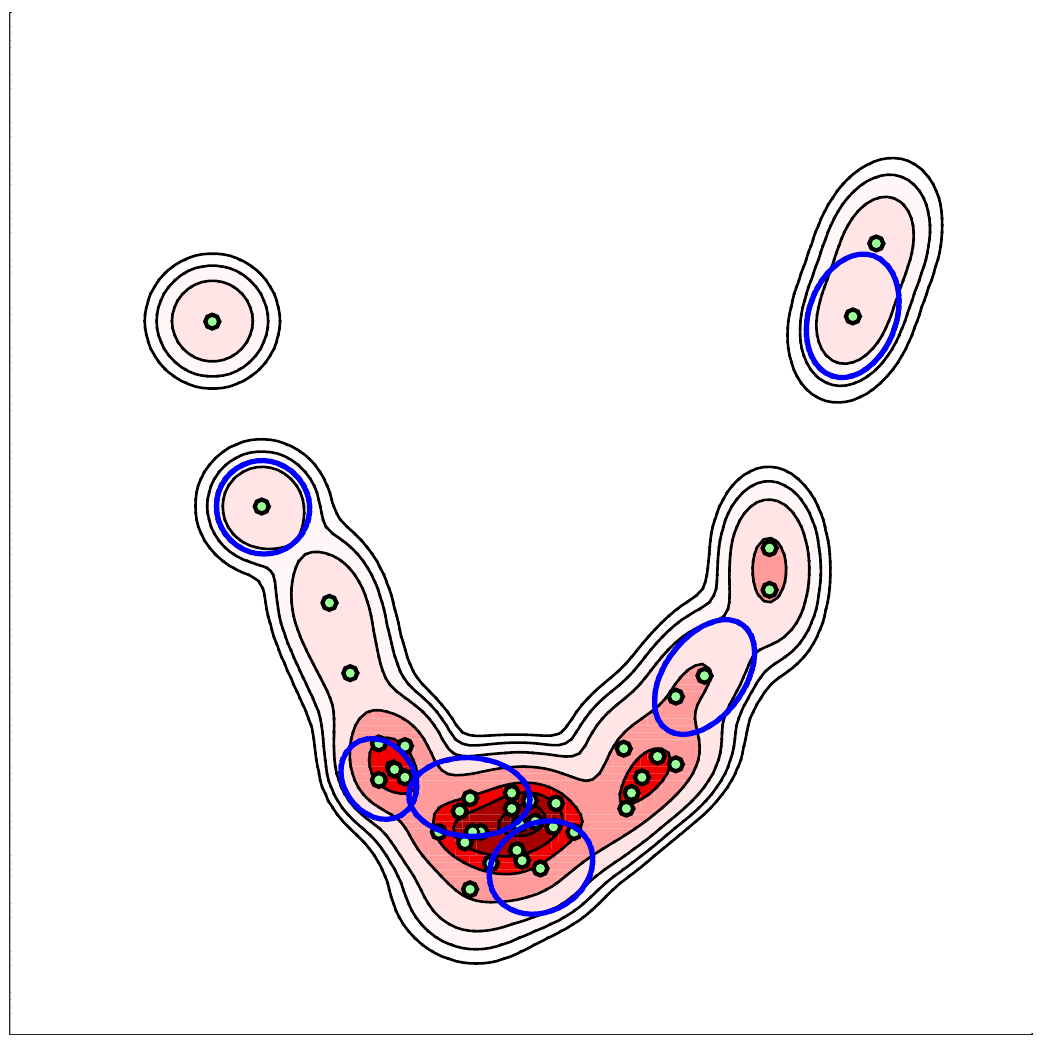}
            \caption{5 iteration steps}						
        \end{subfigure}
        \hfill
        \begin{subfigure}[b]{0.32\textwidth}
            \centering
			\includegraphics[width=\textwidth]{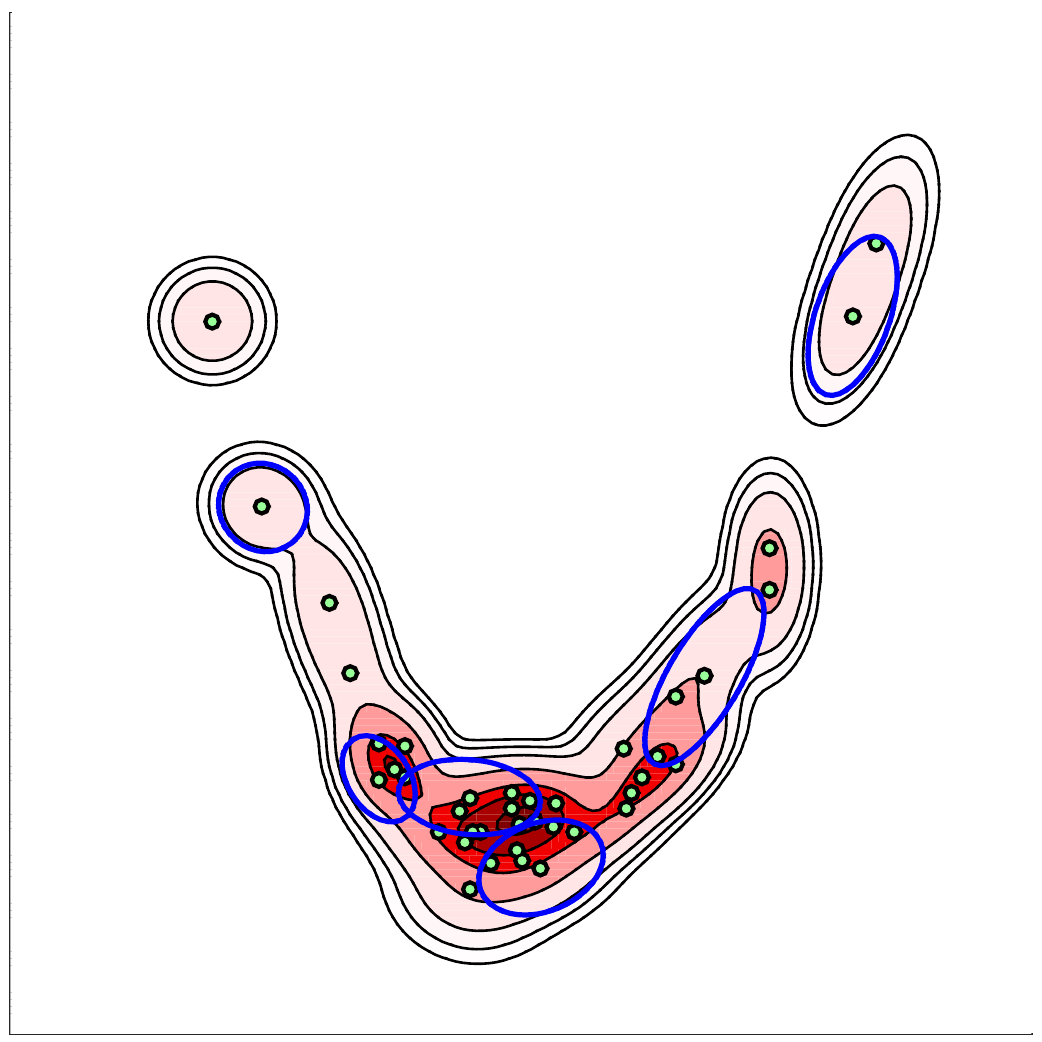}
            \caption{10 iteration steps}						
		\end{subfigure}
        \begin{subfigure}[b]{0.32\textwidth}
            \centering
			\includegraphics[width=\textwidth]{images/fpi20}
            \caption{20 iteration steps}						
		\end{subfigure}
        \caption{The fixed point iteration
        \eqref{equ:densityFixedPointIteration} using bandwidth selector
        \eqref{equ:FinalChoiceBandwidths} starting with
        the standard KDE from Figure \ref{fig:ComparisonVKDEs} (b).
        Ellipses are 80\% contours of six kernels.
        }
		\label{fig:fpi2d}
\end{figure}

%
%
%
%
%
%
%

\subsection{Real Life Example: Earthquake Data}
\label{section:Earthquake}
In his book \cite{zbMATH00907051}, Simonoff\footnote{Courtesy of Jeffrey S. Simonoff, who kindly made the data available on his webpage, \url{http://people.stern.nyu.edu/jsimonof/SmoothMeth/Data/ASCII/quake.dat}}
analyzes the data set `quake.dat' consisting of latitude
and longitude values\footnote{For simplicity, we neglect the curvature of the earth and treat the latitude and longitude values as Cartesian coordinates, which is sufficient for our purposes.} of earthquakes with magnitude at least 5.8 on the Richter scale occurring between January 1964 and February 1986.
We restrict the data to earthquakes occurring in East Asia and the Western Pacific region with magnitude
larger than 6.2 on the Richter scale, thus reducing the number of samples to
145, see Figure \ref{fig:earthquake}(a).
As is evident from Figure \ref{fig:earthquake}, the predictive power of VKDE
using \eqref{equ:FinalChoiceBandwidths} is superior compared to the other two
density estimates.
\begin{figure}[H]
        \centering
        \begin{subfigure}[b]{0.39\textwidth}
            \centering
			\includegraphics[width=\textwidth]{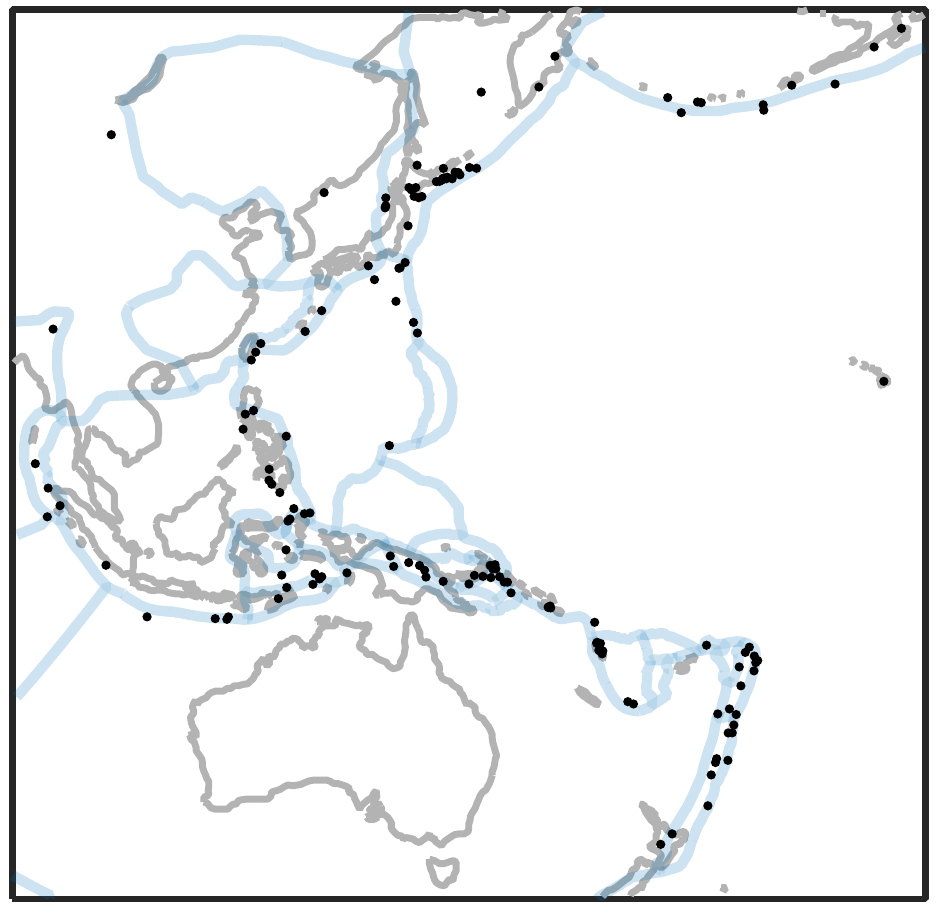}
			\caption{Earthquake data and tectonic plates}
        \end{subfigure}        
        \hspace{1cm}
        \begin{subfigure}[b]{0.39\textwidth}
            \centering
			\includegraphics[width=\textwidth]{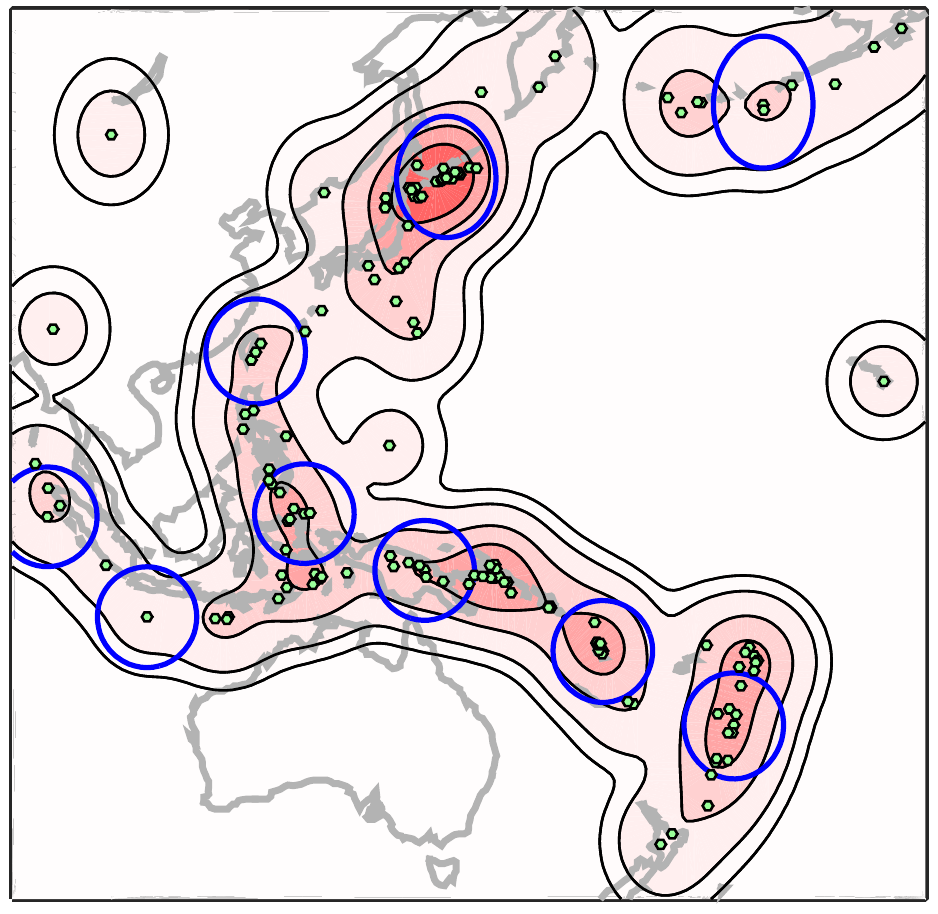}
			\caption{Standard KDE}
		\end{subfigure}
		\vfill
        \vspace{0.1cm}
        \begin{subfigure}[b]{0.39\textwidth}
            \centering
			\includegraphics[width=\textwidth]{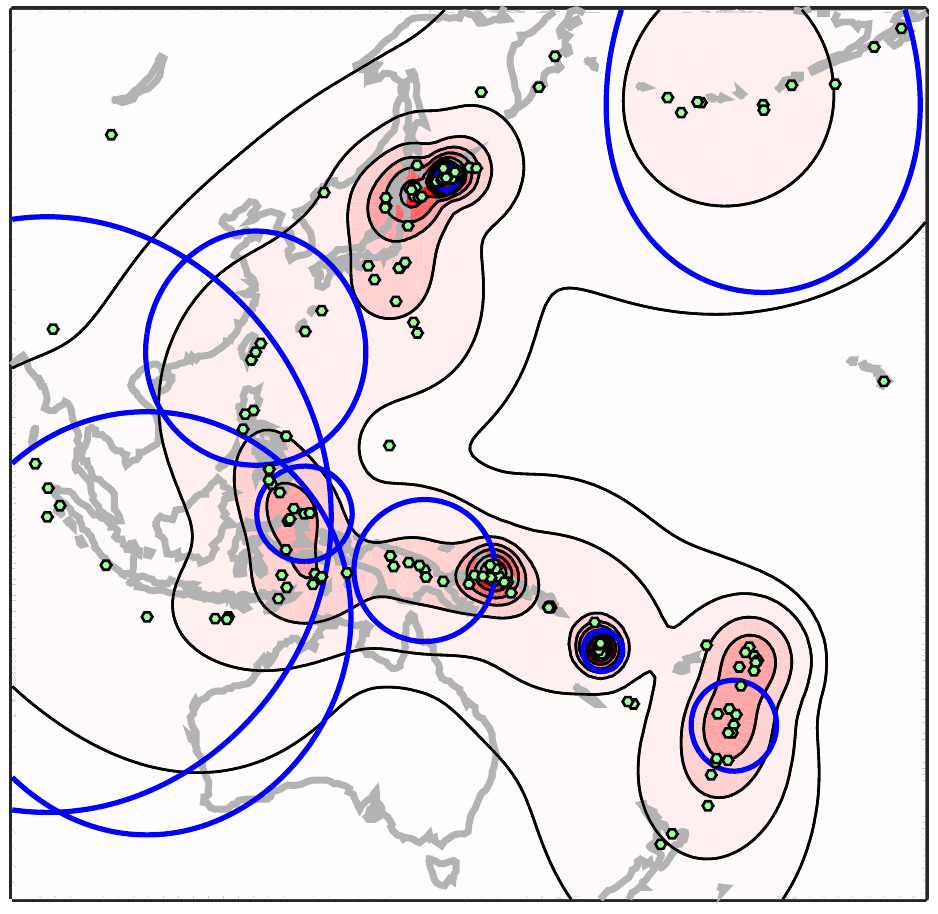}
			\caption{VKDE using
			\eqref{equ:hLaw}, $\beta = 1/2$}
        \end{subfigure}
        \hspace{1cm}
        \begin{subfigure}[b]{0.39\textwidth}
            \centering
			\includegraphics[width=\textwidth]{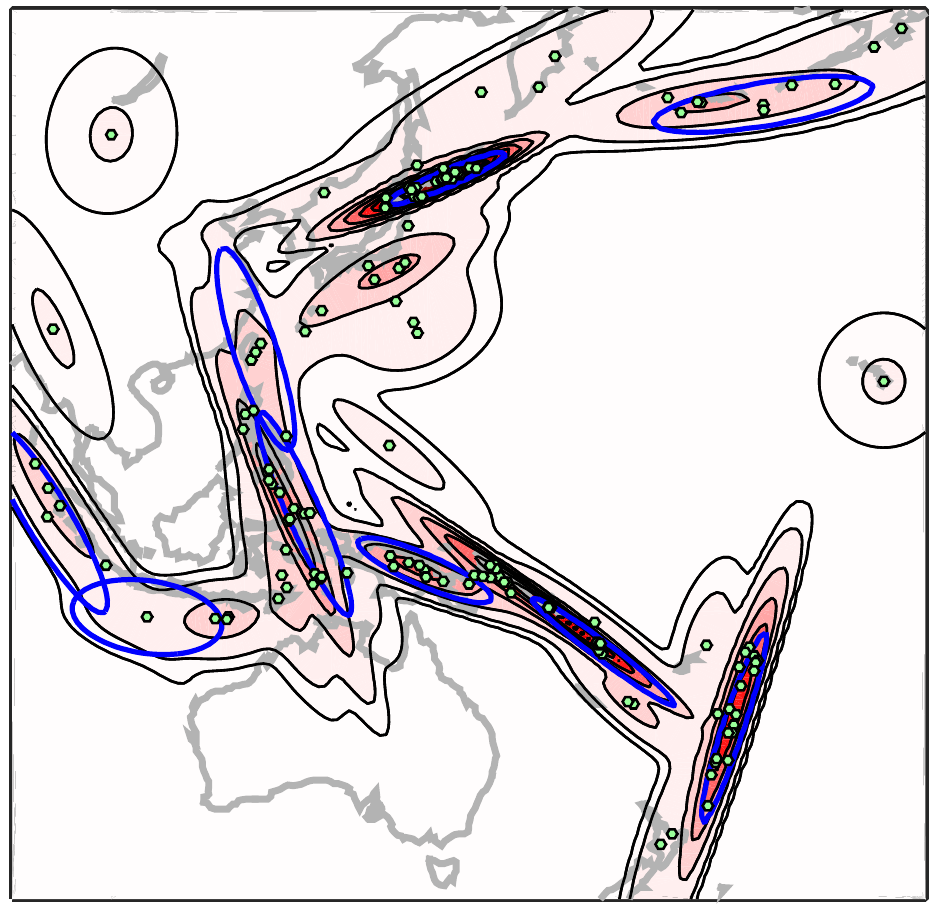}
			\caption{VKDE using
			\eqref{equ:FinalChoiceBandwidths}}
		\end{subfigure}
        \caption{Different KDE approaches applied to the earthquake data
        visualized in (a). For (c) and (d), ten fixed point iteration steps of
        \eqref{equ:densityFixedPointIteration} have been performed, starting
        with the standard KDE from (b). Ellipses are 80\%
        contours of nine kernels.
        }
        \label{fig:earthquake}
\end{figure}

\section{Conclusion}
\label{section:Conclusion}

We presented an axiomatic approach to VKDE as an alternative to the typical asymptotic analysis. We introduced certain invariance axioms that
we want our density estimate to fulfill and derived a
bandwidth selection rule, which satisfies these axioms.
The axioms and the selection rule are based on the theory of adaptive convolutions and the local variation of a function and allow for kernels that are stretched differently in different directions.
By introducing these criteria, we established a general framework for testing of bandwidth selection rules for plausibility.

The suggested rule \eqref{equ:FinalChoiceBandwidths} was compared to
conventional fixed and variable bandwidth selection rules and performed
considerably better in an artificial as well as in a real life example in
Section \ref{section:NumericalExperiments}.

Though we could find an explicit formula for the fixed point iteration \eqref{equ:densityFixedPointIteration} based on rule \eqref{equ:FinalChoiceBandwidths} in the case of Gaussian kernels, its computation is still very costly, restricting its feasibility to small to moderate sample sizes.
In addition, the convergence properties of said fixed point iteration remains an
open problem as well as the choice of the constant $\kappa$ in
\eqref{equ:FinalChoiceBandwidths}.

We hope that this work motivates the development of further invariant bandwidth selection rules superior to the ones in existence.

%
%
%
%
%
%

%
\appendix

\section{Technical Details for Gaussian Kernels}
\label{section:technicalDetails}

If $K(x) = (2\pi)^{-d/2}\exp(-\norm{x}^2/2)$ is the standard Gaussian kernel, the application of the fixed point iteration \eqref{equ:densityFixedPointIteration} to the choice \eqref{equ:FinalChoiceBandwidths} can be performed without any numerical approximations. The tedious part is the computation of $\mu_{\rho_\mth}$ for
$\rho_\mth$ from \eqref{equ:rhoV}.
Denoting the Gaussian function with mean $y\in\R^d$ and covariance matrix $Q\in\R^{d\times d}$ by
\[
G_{y,Q}(x)
=
\frac{\det Q^{-1/2}}{(2\pi)^{d/2}}\, 
\exp\left[-\tfrac{1}{2}(x-y)^\intercal Q^{-1} (x-y)\right],
\]
and abbreviating $G_{Q} = G_{0,Q}$, we obtain for $y_j\in\R^d$ and covariance matrices $Q_j\in\R^{d\times d}$, $j=1,\dots,3$, by applying standard rules for products, derivatives and convolutions of Gaussians,
\begin{align*}
(G_{y_1,Q_1}\, G_{y_2,Q_2})\ast G_{Q_3}^2(x)
&\ =\ 
\frac{\abs{\det Q_3}^{-1/2}}{(4\pi)^{d/2}}
\, 
G_{Q_1+Q_2}(y_1-y_2)
\, 
G_{y_{12}, Q_{12}+Q_3/2 }(x),
\\[0.2cm]
(\nabla G_{y_1,P_1}\, \nabla G_{y_2,P_2} - G_{y_1,P_1}\, D^2G_{y_2,P_2})
\ast G_{Q_3}^2(x)
&\ =\ 
\frac{\abs{\det Q_3}^{-1/2}}{(4\pi)^{d/2}}
\, 
G_{Q_1+Q_2}(y_1-y_2)
\, 
G_{y_{12}, Q_{12}+Q_3/2 }(x)
\, \times
\\[0.1cm]
\times \, \Big[
(Q_1^{-1} & - Q_2^{-1}) Q_{1234}Q_2^{-1} + \big(\alpha_1(x)-\alpha_2(x)\big) \alpha_1^\intercal(x) + Q_2^{-1}
\Big],
\end{align*}
where
\begin{align*}
&Q_{12} = (Q_1^{-1} + Q_2^{-1})^{-1},\quad&
&y_{12} = Q_{12}(Q_1^{-1}y_1 + Q_2^{-1}y_2),&
\\
&Q_{1234} = (Q_{12}^{-1}+2Q_3^{-1})^{-1},\quad&
&\alpha_j(x) = Q_j^{-1}\big(Q_{1234}(Q_{12}y_{12} + 2Q_3^{-1}x)-y_j\big),\quad j=1,2.&
\\
\end{align*}
Now we only need to plug this into the implicit formula \eqref{equ:choiceMu} for $\mu_{\rho_\mth}$:
\[
\mu_{\rho_\mth}^2 (x)
=
\frac{\sum_{n_1,n_2=1}^{N} (\nabla g_{n_1} \nabla g_{n_2} - g_{n_1} D^2 g_{n_2}) \ast G_{(\lambda \mu_{\rho_\mth})^{-2}(x)}^2}
{(2-\lambda^2) \sum_{n_1,n_2=1}^{N} (g_{n_1}g_{n_2})\ast G_{(\lambda \mu_{\rho_\mth})^{-2}(x)}^2}\, (x),
\qquad
g_n := G_{y_n,\, (h_n h_n^{\intercal})^{-1}},
\]
which can be solved by yet another fixed point iteration.

----------------------------------------------------------------------------------------
\bibliographystyle{abbrv}

\bibliography{myBibliography}

\end{document}